\theoremstyle{plain}
\newtheorem{lemma}{Lemma}
\newtheorem{prop}[lemma]{Proposition}
\newtheorem{coro}[lemma]{Corollary}
\newtheorem{theo}[lemma]{Theorem}
\newtheorem{rema}[lemma]{Remark}
\newtheorem{thm-Intro}{Theorem} 
\newtheorem{cor-Intro}{Corollary} 
\numberwithin{equation}{section}
\newcommand{\Ric}{\textup{Ric}}
\newcommand{\Hol}{\textup{Hol}}
\begin{document}
\title[Invariant metrics on the complex ellipsoid]{Invariant metrics on the complex ellipsoid}

\author{Gunhee Cho}
\address{Department of Mathematics\\
University of Connecticut\\
196 Auditorium Road,
Storrs, CT 06269-3009, USA}

\email{gunhee.cho@uconn.edu}


\begin{abstract} 
We provide a class of geometric convex domains on which the Carath\'eodory-Reiffen metric, the Bergman metric, the complete K\"ahler-Einstein metric of negative scalar curvature are uniformly equivalent, but not proportional to each other. In a two-dimensional case, we provide a full description of curvature tensors of the Bergman metric on the weakly pseudoconvex boundary point and show that invariant metrics are proportional to each other if and only if the geometric convex domain is the Poincar\'e-disk. 
\end{abstract}

\maketitle

\tableofcontents

\section{Introduction and results}
In this paper, we study the invariant metrics on complex ellipsoid $E=E(m,n,p)=\{(z,w)\in {\mathbb{C}^n}\times{\mathbb{C}^m} : |z|^2+|w|^{2p}<1 \}$ with $p>0$.

On the unit disk $B_n$ in $\mathbb{C}^n$, the Poincar\'e-metric is the primary example for invariant metrics since invariant metrics on unit disk are just the Poincar\'e-metric up to some constant. Precisely, let's denote by $\gamma_{B_n}$, $\chi_{B_n}$, $g^B_{B_n}$, $g^{KE}_{B_n}$ the Carath\'eodory-Reiffen metric, the Kobayashi-Royden metric, the Bergman metric, and the complete K\"ahler-Einstein metric respectively. These metrics are all invariant under biholomorphisms and we have 
\begin{equation}\label{key}
\gamma_{B_n}(a;v)=\chi_{B_n}(a;v)<\sqrt{g^B_{B_n}((a;v),(a;v))}=\sqrt{g^{KE}_{B_n}((a;v),(a;v))}=c\gamma_{B_n}(a;v)
\end{equation}
for any non-zero tangent vector $(a;v)$ where $c=c(n)>0$. 
Hence if instead of the unit disk in $\mathbb{C}^n$ one considers more general complex manifolds, the four metrics provide characterizations into several classes. For a class of pseudoconvex domains in $\mathbb{C}^n$, one should expect that the relation among intrinsic metrics is different from \eqref{key} but some common phenomenon can be captured. 

With invariant metrics on complex manifolds, there is one long-standing open problem in complex geometry. That is, prove that the Carath\'eodory-Reiffen metric on a simply-connected complete K\"ahler manifold $(M, \omega)$ with negative Riemannian sectional curvature range is equivalent to other invariant metrics and related progress on this problem have been made. Recently, D. Wu and S.T. Yau showed that for this class of K\"ahler manifolds $(M, \omega)$, the base K\"ahler metric $\omega$ is uniformly equivalent to the Bergman metric, the complete K\"ahler-Einstein metric, and the Kobayashi metric (see \cite{DY17}). Based on this result, it is reasonable to ask whether such $(M, \omega)$ must be biholomorphic to a $C^k$-bounded strictly pseudoconvex-domain in $\mathbb{C}^n$ with reasonable $k\geq 0$, since it is known that the metrics are uniformly equivalent to each other for these domains (see, for example, [\cite{MBCFRG83}, \cite{CY80}, \cite{KD70}, \cite{IG75}, \cite{Lem81}, \cite{HW93}] and references therein). In this paper, we show that the complex ellipsoid serves as a bounded weakly pseudoconvex domain (with a further restriction $p>1$ from our results) in $\mathbb{C}^n$ which is not the strictly pseudoconvex domain, but those invariant metrics are uniformly equivalent and they are not proportional to each other.
 
Let's denote the complete K\"ahler-Einstein metric of the Ricci eigenvalue $-1$, the Bergman metric and the Kobayashi-Royden metric on $E=E(m,n,p)$ by $g^{KE}_E$, $g^{B}_E$ and $\chi_{E}$ respectively. Here are our results:

\begin{theo}\label{th:DE}
	Let $E=E(m,n,p)$ for any $m,n\in \mathbb{N}$ with $p>1/2$. Then there exists $C>0$ such that $g^{KE}_E$, $g^{B}_E$ and $\chi_{E}$ are uniformly equivalent to each other by $C>0$. 
\end{theo}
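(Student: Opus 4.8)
The plan is to prove uniform equivalence by establishing that each of the three invariant metrics is uniformly comparable to a single, explicitly computable reference object on $E=E(m,n,p)$. The natural candidate for this reference is a model metric built from the defining function $\rho(z,w)=|z|^2+|w|^{2p}-1$, encoding the degeneration of the boundary geometry along the weakly pseudoconvex set $\{z=0\}$. The strategy is transitivity: if I can show $g^{KE}_E \asymp g^{\mathrm{mod}}$, $g^B_E \asymp g^{\mathrm{mod}}$, and $\chi_E \asymp g^{\mathrm{mod}}$ each with constants depending only on $(m,n,p)$, then the three metrics are mutually uniformly equivalent.

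First I would exploit the symmetry of $E$. The domain is invariant under the $U(n)\times U(m)$ action and, more importantly, under a large group of biholomorphisms; in particular the $z$-slices $\{w=w_0\}$ and the behavior in the $w$-directions can be normalized. I would fix a boundary-like reference frame adapted to $\rho$, splitting tangent vectors into the complex-normal/tangential-to-$z$ part and the $w$-part, and write down the model metric whose coefficients scale like $\rho^{-2}$ in the normal direction and like appropriate powers of $\rho$ and $|w|$ in the degenerate directions. For the Kobayashi–Royden metric $\chi_E$, the upper and lower bounds come from constructing explicit analytic disks (for the upper bound) and composing with holomorphic maps to the disk together with the localization principle for pseudoconvex domains (for the lower bound); here the restriction $p>1/2$ should enter to guarantee that the relevant comparison maps behave correctly near $\{z=0\}$.

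For the Bergman metric $g^B_E$, I would use the fact that $E$ is a (generalized) complex ellipsoid for which the Bergman kernel on the diagonal is known fairly explicitly, or at least admits sharp two-sided asymptotics in terms of $\rho$ and $|w|$; differentiating the logarithm of the kernel and reading off the Hessian then yields two-sided bounds matching the model. For the Kähler–Einstein metric $g^{KE}_E$, existence and completeness follow from the Cheng–Yau theory for bounded pseudoconvex domains, and I would obtain the comparison with the model either by a barrier/maximum-principle argument applied to the Monge–Ampère equation (constructing sub- and super-solutions from $\rho$) or by invoking the Wu–Yau-type uniform equivalence results once the model is shown to have bounded geometry with pinched curvature away from the degenerate locus.

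The main obstacle will be the weakly pseudoconvex locus $\{z=0\}\cap\partial E$, where the Levi form degenerates and all the standard strictly-pseudoconvex asymptotics (Fefferman-type expansions, Lempert theory) break down. Getting the correct anisotropic scaling of each metric in the tangential $w$-directions as one approaches this set — and showing that the $p$-dependent exponents agree across all three metrics — is where the real work lies. I expect to handle this by an anisotropic dilation (parabolic scaling) argument centered at points of $\{z=0\}$, rescaling $z$ and $w$ by matched powers so that $E$ converges to a fixed model domain; uniform equivalence on the fixed limit model, combined with stability of all three metrics under such normal convergence of domains, then transfers back to give the desired constant $C=C(m,n,p)$.
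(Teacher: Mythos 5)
Your overall skeleton (compare all three metrics to one reference object) matches the paper's, but the paper's reference is not a model metric built from the defining function: it is W.~Yin's explicit complete invariant K\"ahler metric $Y$ on $E$, generated by the potential $(1-X)^{-\lambda}(1-|z|^2)^{-N}$ with $X=|w|^2(1-|z|^2)^{-1/p}$. For this metric one has $Y\geq g^{B}_{E}$ by construction, Yin proved its holomorphic sectional curvature is bounded above by a negative constant, and the paper's Proposition~\ref{prop:equiv_metric} proves the matching lower bound $-D$ by an explicit computation (reduced to $z=0$ using the automorphism group). The chain then closes as $\sqrt{g^{B}_{E}}\leq\sqrt{Y}\leq C'\chi_{E}=C'\gamma_{E}\leq C'\sqrt{g^{B}_{E}}$, where the Schwarz lemma gives the second inequality, Lempert's theorem gives the equality (this is exactly where $p>1/2$ enters: it makes $E$ geometrically convex, so $\gamma_E=\chi_E$ --- not, as you suggest, a condition about comparison maps near $\{z=0\}$), and the last inequality holds on any bounded domain. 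Finally, the two-sided curvature pinching of $Y$ lets one invoke Wu--Yau's Theorem 3 to get the equivalence with $g^{KE}_{E}$.

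Measured against this, your proposal has genuine gaps, concentrated exactly where the paper does its real work. First, a factual error: the weakly pseudoconvex boundary points of $E$ (for $p>1$) are those with $w=0$, i.e.\ $|z|=1$; the set $\{z=0\}\cap\partial E$ consists of strictly pseudoconvex points, so your anisotropic scaling is centered at the wrong locus. Second, and more seriously, your treatment of $g^{KE}_E$ is deferred to tools that do not apply: Cheng--Yau/Fefferman-type barrier asymptotics for the Monge--Amp\`ere equation are available only at smooth strictly pseudoconvex boundary points, and your fallback --- Wu--Yau applied to a model with ``pinched curvature away from the degenerate locus'' --- fails because Wu--Yau requires the reference metric's holomorphic sectional curvature to be pinched between two negative constants \emph{uniformly on all of $E$}; the uniform pinching up to the degenerate locus is precisely the hard content (and is what Proposition~\ref{prop:equiv_metric} supplies for $Y$). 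Third, your rescaling step needs stability of all three metrics under normal convergence of domains; for the Bergman and Kobayashi metrics versions of this exist, but for the K\"ahler--Einstein metric it is not an established fact, and in any case the blow-up limit of $E$ at a weakly pseudoconvex point is again a domain with the same degeneracy (of the form $\{\mathrm{Re}\,\zeta+|w|^{2p}<0\}$), so ``uniform equivalence on the limit model'' is not easier than the original statement --- as written, that step is circular.
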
 

Moreover, the proof of Theorem~\ref{th:DE} yields the equivalence of two invariant metrics on closed submanifolds of $E$:
\begin{coro}\label{cor:from_main}
	Let $E=E(m,n,p)$ for any $m,n\in \mathbb{N}$ with $p>1/2$. Then for any closed complex submanifold $S$ in $E$, there exists $C>0$ such that $g^{KE}_S$ and $\chi_{S}$ are uniformly equivalent by $C>0$. 
\end{coro}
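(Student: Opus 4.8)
The plan is to deduce Corollary~\ref{cor:from_main} as a direct consequence of the equivalences established in Theorem~\ref{th:DE}, exploiting the decreasing/restriction properties of the two invariant metrics under holomorphic inclusions. The key observation is that both $g^{KE}$ and $\chi$ behave well under passage to a closed complex submanifold, but in opposite directions, so the inequality sandwich proving equivalence on $E$ can be transported to $S$.

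First I would recall the functorial behavior of each metric. The Kobayashi-Royden metric is distance-decreasing under holomorphic maps; in particular, if $\iota\colon S\hookrightarrow E$ is the inclusion of a closed complex submanifold, then for every $a\in S$ and every holomorphic tangent vector $v\in T_aS$ one has $\chi_E(a;\iota_*v)\le \chi_S(a;v)$, since the identity map $S\to S$ factors through $E$. Conversely, the complete Kähler-Einstein metric of Ricci eigenvalue $-1$ enjoys a Schwarz-type comparison: by the Yau-Schwarz lemma applied to the inclusion $S\hookrightarrow E$, the restriction $\iota^*g^{KE}_E$ is bounded above by $g^{KE}_S$, i.e. $g^{KE}_E(a;\iota_*v,\iota_*v)\le g^{KE}_S(a;v,v)$. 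Thus on the submanifold the two metrics already straddle the restricted ambient metrics from the two sides.

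Next I would combine these with Theorem~\ref{th:DE}. Writing $\|\cdot\|$ for the norm induced by the relevant metric, the theorem gives a constant $C>0$ with
\begin{equation*}
C^{-1}\,\chi_E(a;u)\le \sqrt{g^{KE}_E(a;u,u)}\le C\,\chi_E(a;u)
\end{equation*}
for all $(a;u)$ tangent to $E$. Specializing $u=\iota_*v$ for $v\in T_aS$ and inserting the two monotonicity inequalities above yields
\begin{equation*}
\sqrt{g^{KE}_S(a;v,v)}\ge \sqrt{g^{KE}_E(a;\iota_*v,\iota_*v)}\ge C^{-1}\chi_E(a;\iota_*v),
\end{equation*}
which by $\chi_E(a;\iota_*v)\le \chi_S(a;v)$ does \emph{not} immediately close the loop; so the main obstacle is to control $\chi_S$ from above by the ambient data and to get a matching lower bound on $g^{KE}_S$. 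I expect the resolution to come precisely from the fact that both $g^{KE}_S$ and $\chi_S$ are themselves equivalent to the \emph{restriction} of the Wu-Yau-type background geometry on $E$, which is what the proof of Theorem~\ref{th:DE} actually produces: the argument there does not merely compare the two metrics on $E$ but bounds each between multiples of the ambient Kähler form, and every such bound survives restriction to a closed submanifold together with a Schwarz lemma controlling the intrinsic metrics of $S$.

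Concretely, the hard step is the uniform upper bound for $\chi_S$. For this I would use that $E$ is bounded and hyperconvex, hence Kobayashi hyperbolic and complete, so $\chi_S$ is a genuine metric, and then apply a localized Schwarz lemma: near each boundary behavior one embeds a sufficiently large ambient polydisk/ellipsoid patch producing a lower bound for $\chi_S$ of the form $\chi_S(a;v)\ge c\sqrt{g^{KE}_S(a;v,v)}$ via comparison with the negatively curved $g^{KE}_S$ and the Yau-Schwarz lemma in the reverse direction (mapping $S$ into $E$ and back). Since $S$ is closed in $E$ its Kähler-Einstein metric $g^{KE}_S$ exists and is complete with Ricci eigenvalue $-1$, and the two one-sided estimates together with the constant $C$ from Theorem~\ref{th:DE} combine to give a single constant $C'=C'(C)>0$ with $C'^{-1}\chi_S\le \sqrt{g^{KE}_S}\le C'\chi_S$ on all of $S$. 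I anticipate that verifying the completeness and the correct direction of each Schwarz estimate on $S$ — rather than any new computation — is where the care is required.
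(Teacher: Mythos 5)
Your proposal has a genuine gap, and it sits exactly where you flag ``the main obstacle.'' The paper's proof (the second Remark at the end of Section 3) does not try to transport the inequalities of Theorem~\ref{th:DE} from $E$ down to $S$ at all. Instead it restricts W.~Yin's auxiliary metric $Y$ to $S$ and re-runs the machinery \emph{intrinsically} on $S$: since $S$ is closed in $E$, the restriction $Y|_S$ is a complete K\"ahler metric on $S$; since holomorphic sectional curvature does not increase on passing to a K\"ahler submanifold (Gauss equation), $Y|_S$ retains the negative holomorphic sectional curvature range established in \cite{WY97} and Proposition~\ref{prop:equiv_metric}; and then Theorems 2 and 3 of \cite{DY17}, applied to the pair $(S,Y|_S)$, simultaneously produce the complete K\"ahler--Einstein metric $g^{KE}_S$ and give the two-sided comparisons of $g^{KE}_S$ and of $\chi_S$ with $Y|_S$ (the lower bound $\chi_S\geq \sqrt{C/2}\,\sqrt{Y|_S}$ being the Royden--Yau Schwarz lemma), whence $\chi_S$ and $\sqrt{g^{KE}_S}$ are uniformly equivalent. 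This auxiliary object --- a complete K\"ahler metric \emph{on $S$ itself} with negatively pinched holomorphic sectional curvature --- is precisely what your argument never introduces, and without it your remaining steps are either wrongly directed or rest on hypotheses that are not available.

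Concretely: (i) you assert that $g^{KE}_S$ exists and is complete ``since $S$ is closed in $E$,'' but the Mok--Yau existence theorem is for bounded pseudoconvex domains, not for arbitrary closed Stein submanifolds; in the paper the existence of $g^{KE}_S$ is itself a \emph{conclusion}, coming from Theorem 2 of \cite{DY17} applied to $(S,Y|_S)$. (ii) Your monotonicity $\iota^*g^{KE}_E\leq g^{KE}_S$ via the Yau--Schwarz lemma requires the holomorphic sectional curvature of the target metric $g^{KE}_E$ to be bounded above by a negative constant, which is not established for general $E(m,n,p)$ (Bland's pinching in \cite{JSB86} is only cited for $E(m,1,p)$, $p\geq 1$); and in any case, as you yourself observe, this inequality and the Kobayashi decreasing property both give lower bounds, so the loop cannot close. (iii) Your proposed fix fails: holomorphic polydisk ``patches'' sit in $E$, not in $S$, so they control $\chi_E$ and say nothing about $\chi_S$; the displayed estimate you then write, $\chi_S(a;v)\geq c\sqrt{g^{KE}_S(a;v,v)}$, is a lower bound rather than the upper bound you announced as the hard step; and it invokes negative (holomorphic sectional) curvature of $g^{KE}_S$, which does not follow from the Einstein condition $\mathrm{Ric}=-g^{KE}_S$. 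All of these issues disappear in the paper's route, because every Schwarz-lemma and every Wu--Yau comparison is applied on $S$ with $Y|_S$ as the negatively curved reference metric, rather than with ambient data or with the unknown curvature of $g^{KE}_S$.
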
  

For the comparison of the K\"ahler-Einstein metric and the Bergman metric, S. Fu and B. Wong showed that for a simply-connected strictly pseudoconvex domain in $\mathbb{C}^2$ with smooth boundary, if the Bergman metric is K\"ahler-Einstein, then the domain must be biholomorphic to the disk (see \cite{SFBW97}). This is also claimed to be true for $\mathbb{C}^n$ (see, for example, \cite{XHMX16}). Since the complex ellipsoid $E=E(m,n,p)$ with $p>1$ is a weakly pseudoconvex domain because of the special boundary points $|z|=1$, further investigation should be made to compare the K\"ahler-Einstein metric with the Bergman metric on $E=E(1,1,p)$. Here is our result:

\begin{theo}\label{th:BE}
	Let $E=E(1,1,p)=\{(z,w)\in {\mathbb{C}^1}\times{\mathbb{C}^1} : |z|^2+|w|^{2p}<1 \}$ with $p>0$. Then $g^{B}_{E}\neq \lambda g^{KE}_E$ for any $\lambda>0$ if and only if $p\neq 1$.
\end{theo}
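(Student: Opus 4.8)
The plan is to compute both metrics explicitly on $E(1,1,p)$ and compare them. The domain $E(1,1,p)=\{|z|^2+|w|^{2p}<1\}$ is a Thullen-type domain (complex ellipsoid), and for such domains both the Bergman metric and the complete K\"ahler--Einstein metric are amenable to explicit description because of the large automorphism group. The key structural fact is that $E(1,1,p)$ carries an $(S^1\times S^1)$-action by rotations in the $z$ and $w$ coordinates, together with the one-parameter families of automorphisms coming from the disk automorphisms acting on the $z$-slice. This symmetry forces both $g^B_E$ and $g^{KE}_E$ to have K\"ahler potentials that are functions of the single real variable $t=|z|^2+|w|^{2p}$ (or an equivalent invariant combination), which reduces the entire comparison to an analysis of ODEs/functions of one variable.

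First I would write down the Bergman kernel explicitly. For $E(1,1,p)$ the Bergman kernel $K_E((z,w),(z,w))$ can be obtained by integrating over the disk-fibration or by the known formula for Hartogs domains: since $E(1,1,p)$ is the Hartogs domain $\{|w|^{2p}<1-|z|^2\}$ over the unit disk, one expands the kernel in a monomial basis $z^j w^k$ and sums the resulting series, yielding $K_E$ as a rational-type expression in $|z|^2$ and $|w|^{2p}$. Differentiating $\log K_E$ twice gives the Bergman metric coefficients $g^B_{i\bar\jmath}$ as explicit functions. Next I would produce the K\"ahler--Einstein metric: by the symmetry reduction, the complete K\"ahler--Einstein potential $u$ depending on the invariant $t$ satisfies the complex Monge--Amp\`ere equation, which collapses to an ODE in $t$; for $p=1$ the domain is the unit ball $B_2$ and the solution is the Bergman/Poincar\'e potential, so one can analyze the $p\neq 1$ case as a deformation. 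It is cleaner, however, to exploit that $E(1,1,p)$ is biholomorphic to a domain for which the K\"ahler--Einstein metric has been computed in the literature (these Thullen domains are classical), and to extract the metric tensor in the same coordinates as $g^B_E$.

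With both tensors in hand, the strategy is to compare them at a convenient test point and test direction, or to compare the full proportionality condition. Proportionality $g^B_E=\lambda g^{KE}_E$ would require the ratio of corresponding metric coefficients to be a constant $\lambda$ independent of $(z,w)$; equivalently, the two K\"ahler potentials differ by a constant multiple plus a pluriharmonic term. The most efficient route is to evaluate both metrics along the origin and along the boundary behavior, or to compute a differential invariant that vanishes precisely under proportionality. A natural such invariant is the holomorphic sectional curvature, or the ratio of the two metrics restricted to the $w$-axis versus the $z$-axis: because the ellipsoid treats the two coordinates asymmetrically when $p\neq 1$, I expect $g^B_E$ and $g^{KE}_E$ to ``weight'' the $z$- and $w$-directions by different $p$-dependent factors, so that their ratio along the $z$-axis differs from their ratio along the $w$-axis unless $p=1$. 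Checking $p=1$ separately recovers $B_2$, where both metrics equal a constant multiple of the Bergman metric of the ball and hence are proportional, giving the forward implication.

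The main obstacle I anticipate is obtaining the K\"ahler--Einstein metric in sufficiently explicit closed form to carry out the comparison, since the Monge--Amp\`ere ODE for general $p$ need not have an elementary solution; the saving grace is that \emph{proportionality is a pointwise algebraic condition}, so I do not need the global closed-form solution but only enough local data — the $2$-jet of $u$ at a single symmetric point together with the asymptotic/scaling behavior forced by the automorphisms — to detect a $p$-dependent discrepancy. Concretely, the cleanest finish is to assume $g^B_E=\lambda g^{KE}_E$, deduce that the two potentials have the same Ricci form up to scale, and then use the Einstein equation $\Ric(g^{KE}_E)=-g^{KE}_E$ to convert the proportionality into an explicit constraint on $\log\det(g^B_E)$; evaluating that constraint along the two distinguished axes and comparing the $p$-dependent coefficients should force $p=1$, with the converse ($p=1\Rightarrow$ proportional) following from $E(1,1,1)=B_2$.
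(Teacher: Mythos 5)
Your closing strategy coincides with the paper's proof: the paper takes D'Angelo's explicit Bergman kernel, computes the Bergman metric and its Ricci tensor at points $(z,0)$ with $z=\overline{z}$, and shows that $\Ric_{1\overline{1}}/g_{1\overline{1}}=-1$ while $\Ric_{2\overline{2}}/g_{2\overline{2}}=-(2p^3+10p^2+10p+5)/\bigl((p+2)^2(2p+1)\bigr)$, so the Bergman metric is Einstein---which proportionality to $g^{KE}_E$ forces, by scale-invariance of the Ricci tensor under constant rescaling---precisely when $p=1$, the ball case. The only blemish is your non-essential intermediate claim that both potentials depend on $t=|z|^2+|w|^{2p}$ alone; the true automorphism-group invariant is $|w|^2(1-|z|^2)^{-1/p}$, but your final argument never uses that claim, so it does not affect the outcome.
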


For the proof of Theorem~\ref{th:DE} and Theorem~\ref{th:BE}, the explicit formula of Bergman kernel on $E$ is used which was obtained by J.P. D'Angelo (See \cite{JPDA94}). We also provide the explicit formula of the Bergman metric and curvature tensors for Theorem~\ref{th:BE} in Section 4. 

For the comparison of Carath\'eodory-Reiffen metric and the K\"ahler-Einstein metric on $E$, we have the following: 

\begin{theo}\label{th:CE}
	Let $E=E(m,n,p)$ for any $m,n\in \mathbb{N}$ with $p>1/2$. Denote $\gamma_{E}$ be the Carath\'eodory-Reiffen metric. Then 
 \begin{equation}\label{eq:SY1}
\gamma_{E}\lneq \sqrt{{g^{KE}_E}}.
\end{equation}	
Furthermore, if we exclude $p=1$, then for any $\lambda>0$,
\begin{equation*}
\gamma_{E}\neq \lambda \sqrt{{g^{KE}_E}}.
\end{equation*}

\end{theo}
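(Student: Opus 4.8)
The plan is to prove the two assertions separately, both resting on Yau's Schwarz lemma together with the distance-decreasing property of the Carath\'eodory--Reiffen metric.

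\emph{The inequality \eqref{eq:SY1}.} Since $E$ is a bounded pseudoconvex domain, $g^{KE}_E$ is complete and satisfies $\Ric=-g^{KE}_E$. Normalize the Poincar\'e metric $g_\Delta$ on the unit disk so that its holomorphic sectional curvature is $-1$. For every holomorphic $f\colon E\to\Delta$, Yau's Schwarz lemma applied to $f$ (source complete K\"ahler with $\Ric\ge -1$, target of holomorphic sectional curvature $\le -1$) yields $f^{*}g_\Delta\le g^{KE}_E$ as Hermitian forms. Because $\gamma_{E}(a;v)^{2}=\sup_{f}(f^{*}g_\Delta)(v,\bar v)$, the supremum running over all such $f$ with $f(a)=0$, taking the supremum gives $\gamma_{E}\le\sqrt{g^{KE}_E}$.

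\emph{Strictness ($\gamma_{E}\not\equiv\sqrt{g^{KE}_E}$).} If equality held in every direction, then for each $(a;v)$ the supremum above would be attained with equality in the Schwarz lemma along $v$. By the sharp (Royden) form of the lemma, such equality produces a holomorphic isometric disk tangent to $v$ along which $g^{KE}_E$ has holomorphic sectional curvature $-1$; imposing this in all directions forces $g^{KE}_E$ to have constant holomorphic sectional curvature $-1$, making $E$ complex hyperbolic. But the complex hyperbolic metric with $\Ric=-1$ on an $N$-dimensional domain, $N=m+n\ge 2$, has holomorphic sectional curvature $-2/(N+1)\ne-1$, a contradiction. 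Hence $\gamma_{E}\lneq\sqrt{g^{KE}_E}$.

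\emph{Non-proportionality for $p\ne1$.} Suppose $\sqrt{g^{KE}_E}=\lambda\gamma_{E}$ for some $\lambda>0$. Consider the two one-dimensional slices $S_{z}=\{(z_{1},0,\dots;0,\dots)\}$ and $S_{w}=\{(0,\dots;w_{1},0,\dots)\}$, each biholomorphic to $\Delta$. The coordinate projections $(z,w)\mapsto z_{1}$ and $(z,w)\mapsto w_{1}$ map $E$ into $\Delta$ and restrict to the identity on $S_{z}$ and $S_{w}$; combined with the distance-decreasing property for the inclusions $S_{z},S_{w}\hookrightarrow E$, this yields the exact identities $\gamma_{E}|_{S_{z}}=\gamma_{\Delta}$ and $\gamma_{E}|_{S_{w}}=\gamma_{\Delta}$. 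Proportionality then forces the restricted coefficients of $g^{KE}_E$ to be one and the same constant multiple of the Poincar\'e weight,
\begin{equation*}
g^{KE}_{z_{1}\bar z_{1}}(z,0)=\frac{c\,\lambda^{2}}{(1-|z|^{2})^{2}},\qquad g^{KE}_{w_{1}\bar w_{1}}(0,w)=\frac{c\,\lambda^{2}}{(1-|w|^{2})^{2}},
\end{equation*}
with a single $\lambda$, matched by evaluation at the origin. I would then compute both coefficients from the explicit K\"ahler--Einstein metric of Section~4 (built from D'Angelo's Bergman kernel) and show that no single $\lambda$ satisfies both once $p\ne1$: the slice $S_{z}$ abuts the weakly pseudoconvex locus $\{|z|=1,\,w=0\}$ for $p>1$ (the non-smooth locus for $1/2<p<1$), so the boundary blow-up of $g^{KE}_{z_{1}\bar z_{1}}(z,0)$ as $|z|\to1$ differs in rate or constant from the strictly pseudoconvex behavior of $g^{KE}_{w_{1}\bar w_{1}}(0,w)$ as $|w|\to1$, exactly as in the computation underlying Theorem~\ref{th:BE}. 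This contradiction gives $\gamma_{E}\ne\lambda\sqrt{g^{KE}_E}$ for all $\lambda>0$.

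\emph{Main obstacle.} The inequality and the non-degeneracy $\lneq$ are soft consequences of the Schwarz lemma and the distance-decreasing property, and require no computation. The genuine work is the last step: extracting the precise boundary asymptotics of the restricted K\"ahler--Einstein coefficient along $S_{z}$ near the degenerate boundary and comparing it with the strictly pseudoconvex behavior along $S_{w}$. This is where the exponent $p$ enters decisively, and it is here that the explicit formulas of Section~4 and Cheng--Yau-type boundary estimates for $g^{KE}_E$ are needed.
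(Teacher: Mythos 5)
Your first step coincides with the paper's (both deduce $\gamma_E\le\sqrt{g^{KE}_E}$ from Yau's Schwarz lemma), but your normalizations are inconsistent: with $g_\Delta$ normalized to holomorphic sectional curvature $-1$, one has $\sup_f(f^*g_\Delta)(v,\bar v)=4\,\gamma_E(a;v)^2$, not $\gamma_E(a;v)^2$ (test it on the unit disk, where the extremal maps are M\"obius transformations), so your two displayed claims cannot both be correct; the factor-of-four slack happens to be absorbed elsewhere, so the final inequality is true but the derivation as written is not. The serious problems are in the other two steps. For strictness, the step ``equality produces a holomorphic isometric disk tangent to $v$'' is not a citable result: Royden's refinement of Yau's lemma improves the constant for higher-dimensional targets with Ricci bounds and is not an equality-rigidity statement, and the Carath\'eodory extremal objects are maps \emph{out of} $E$ into $\Delta$, not disks \emph{inside} $E$; converting equality for such a map into a totally geodesic isometrically embedded disk (strong maximum principle in the Chern--Lu inequality, then integrating the horizontal distribution of the extremal function) is exactly the work you omit. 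The paper proves strictness by an entirely different, soft argument: equality everywhere would give $c^{i}_{E}=d^{KE}_{E}$ at the level of distances; since $E$ is convex, Lempert's theorem gives $\gamma_E=\chi_E$, which forces the Carath\'eodory distance to be inner, hence $c_E=c^i_E=d^{KE}_E$ by \eqref{eq:10}, contradicting the main theorem of \cite{GC18}.

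For the non-proportionality assertion ($p\neq1$), which is the heart of the theorem, your plan rests on a misreading and cannot be executed: the ``explicit K\"ahler--Einstein metric of Section~4'' does not exist --- Section~4 computes the \emph{Bergman} metric from D'Angelo's kernel \cite{JPDA94}, and no closed form of $g^{KE}_E$ is available (Bland \cite{JSB86} treats only $E(m,1,p)$, by ODE methods, while the theorem concerns all $E(m,n,p)$). Your slice identities $\gamma_E|_{S_z}=\gamma_\Delta$ and $\gamma_E|_{S_w}=\gamma_\Delta$ are correct, but the decisive comparison of boundary blow-up rates of $g^{KE}_{z_1\bar z_1}$ and $g^{KE}_{w_1\bar w_1}$ near the degenerate boundary is a hard estimate that you explicitly leave undone, so this part is a plan, not a proof. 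The paper's own argument (Proposition~\ref{CKE-comparison}) needs no asymptotics at all: if $\gamma_E$ were proportional to $\sqrt{g^{KE}_E}$, then the Carath\'eodory--Reiffen metric would be a constant multiple of a complete K\"ahler metric, and Theorem~2 of \cite{BW77} then forces $E$ to be biholomorphic to the unit ball, which fails precisely when $p\neq1$. In short, the inequality part matches the paper, but both the strictness and the non-proportionality parts of your proposal have genuine gaps that the paper's soft arguments (Lempert plus \cite{GC18}, and Wong's characterization of the ball) are specifically designed to avoid.
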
 

By combining the Theorems ~\ref{th:DE}, ~\ref{th:BE},~\ref{th:CE}, we obtain:

\begin{coro}
On the complex ellipsoid $E=E(1,1,p)=\{(z,w)\in {\mathbb{C}^1}\times{\mathbb{C}^1} : |z|^2+|w|^{2p}<1 \}$, with $1\neq p>1/2$. there exists $C>0$ such that the followings hold: for any $\lambda>0$,
\begin{align*}
\frac{1}{C}\sqrt{g^{B}_{E}}&<\chi_{E}<C\sqrt{g^{B}_{E}}, \\
\frac{1}{C}\sqrt{g^{KE}_{E}}&<\chi_{E}<C\sqrt{g^{KE}_{E}}, \\
\frac{1}{C}{g^{KE}_{E}}&<g^{B}_{E}<C{g^{KE}_{E}},\\
\gamma_{E}&=\chi_{E}, \\
g^{B}_{E}& \neq \lambda g^{KE}_{E}, \\
\gamma_{E}& \neq \lambda \sqrt{{g^{KE}_E}}, \\
\gamma_{E}& \neq \lambda \sqrt{{g^{B}_E}}, \\
\gamma_{E}&\lneq \sqrt{{g^{KE}_E}}, 
\end{align*}
\begin{equation}
\gamma_{E} \lneq \sqrt{g^{B}_{E}} \label{eq:CE}. 
\end{equation}
\end{coro}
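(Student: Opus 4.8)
The plan is to assemble the corollary from the three main theorems together with two classical facts, isolating the non-proportionality of $\gamma_E$ with $\sqrt{g^B_E}$ and the inequality \eqref{eq:CE} as the only genuinely new ingredients. First I would dispatch everything that is a direct citation. Theorem~\ref{th:DE} asserts that $\chi_E$, $\sqrt{g^{KE}_E}$ and $\sqrt{g^B_E}$ are mutually comparable by a single constant $C$; reading off the pairs $\chi_E\asymp\sqrt{g^B_E}$, $\chi_E\asymp\sqrt{g^{KE}_E}$ and (after squaring) $g^{KE}_E\asymp g^B_E$ gives the first three displayed lines. The line $g^B_E\neq\lambda g^{KE}_E$ is precisely Theorem~\ref{th:BE} under $p\neq 1$, while $\gamma_E\neq\lambda\sqrt{g^{KE}_E}$ and $\gamma_E\lneq\sqrt{g^{KE}_E}$ (that is, \eqref{eq:SY1}) are precisely Theorem~\ref{th:CE}.

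Next I would record the geometry of $E=E(1,1,p)$ for $p>1/2$. The function $|w|^{2p}=(|w|^2)^p$ is convex exactly when $2p\geq 1$, so $|z|^2+|w|^{2p}-1$ is a convex defining function and $E$ is a bounded \emph{convex} domain; it is moreover balanced, since $|\zeta z|^2+|\zeta w|^{2p}\leq|z|^2+|w|^{2p}$ for $|\zeta|\leq 1$. Convexity yields $\gamma_E=\chi_E$ by Lempert's theorem \cite{Lem81}, which is the fourth line. The balanced pseudoconvex structure then identifies the infinitesimal metric at the center with the Minkowski functional of $E$: one has $\chi_E(0;\cdot)=h$, where $h(a,b)=\inf\{t>0:(a/t,b/t)\in E\}$, and hence $\gamma_E(0;\cdot)=h$, whose unit indicatrix is $E$ itself.

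The key step is the line $\gamma_E\neq\lambda\sqrt{g^B_E}$, which I would prove by comparing indicatrices at the origin. Since $g^B_E(0)$ is a Hermitian form, $\sqrt{g^B_E(0;\cdot)}$ is a Hermitian norm and its unit indicatrix is an ellipsoid; on the other hand $\gamma_E(0;\cdot)=h$ has indicatrix $E=\{|a|^2+|b|^{2p}<1\}$, which is an ellipsoid only when $p=1$. A Minkowski functional with non-ellipsoidal indicatrix cannot be a positive scalar multiple of a Hermitian norm, so for $p\neq 1$ no $\lambda$ works; the identical argument re-derives $\gamma_E\neq\lambda\sqrt{g^{KE}_E}$. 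I would flag this as the main obstacle: $\gamma_E\neq\lambda\sqrt{g^B_E}$ does \emph{not} follow formally from the other two non-proportionalities (one could a priori have both ratios equal to each other yet different from the Kähler--Einstein one), so the shape of the indicatrix at the center is genuinely needed.

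Finally, for \eqref{eq:CE} I would give the reproducing-kernel argument (Hahn's comparison), which yields the strict inequality on all of $E$. Let $f_0\colon E\to\mathbb{D}$ be an extremal map for $\gamma_E(a;v)$, so $f_0(a)=0$, $|f_0|<1$ and $|df_0(a)(v)|=\gamma_E(a;v)$, and let $k_a=K_E(\cdot,a)/\sqrt{K_E(a,a)}$ be the $L^2$-normalized Bergman kernel, so that $\|k_a\|_{L^2}=1$ and $k_a(a)=\sqrt{K_E(a,a)}$. Then $F=f_0\,k_a$ satisfies $F(a)=0$, $\|F\|_{L^2}<\|k_a\|_{L^2}=1$ (strictly, because $|f_0|<1$ on the open set $E$), and $|dF(a)(v)|=\gamma_E(a;v)\sqrt{K_E(a,a)}$. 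Feeding $F/\|F\|_{L^2}$ into the extremal characterization $g^B_E(a;v,\bar v)=K_E(a,a)^{-1}\sup\{|df(a)(v)|^2:f\in A^2(E),\,f(a)=0,\,\|f\|_{L^2}\leq 1\}$ gives $g^B_E(a;v,\bar v)\geq\gamma_E(a;v)^2/\|F\|_{L^2}^2>\gamma_E(a;v)^2$, hence $\gamma_E\lneq\sqrt{g^B_E}$, establishing \eqref{eq:CE}.
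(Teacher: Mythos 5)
Your proposal is correct, and it reproduces the paper's own assembly for most items: lines 1--3 are read off Theorem~\ref{th:DE}, line 4 is Lempert's theorem via convexity of $E$ for $p>1/2$, line 5 is Theorem~\ref{th:BE}, and lines 6 and 8 are Theorem~\ref{th:CE}. Where you genuinely diverge is on the two remaining items, and both of your arguments work. For $\gamma_E\neq\lambda\sqrt{g^B_E}$ you are right to flag that it does not follow formally from the other two non-proportionalities; the paper's phrase ``by combining the theorems'' glosses this, and what the paper actually uses is its Section~4 corollary to Proposition~\ref{prop:hsc}: if $\chi_E=\lambda\sqrt{g^B_E}$, then $\gamma_E=\chi_E$ together with Theorem~1 of \cite{BW77} forces the Bergman metric to have constant holomorphic sectional curvature, contradicting the explicit values $h(\partial/\partial z)=-2p/(1+2p)$ and $h(\partial/\partial w)=-(1+4p+p^2)/(2+p)^2$, which differ when $p\neq 1$. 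Your alternative --- that on the balanced convex domain $E$ the metric $\gamma_E(0;\cdot)$ equals the Minkowski functional of $E$, whose indicatrix $E$ is a Hermitian (ellipsoidal) unit ball only when $p=1$, while $\sqrt{g^B_E(0;\cdot)}$ is always a Hermitian norm --- is more elementary, avoids all curvature computations, and re-derives line 6 for free; what it does not produce is the curvature data near the weakly pseudoconvex boundary points, which the paper needs anyway for Theorem~\ref{th:BE} and Proposition~\ref{prop:aysmp-Ric}. For \eqref{eq:CE} the paper simply cites \cite{JP13} (the inequality holds on any bounded domain), whereas your reproducing-kernel argument with $F=f_0\,k_a$ and the extremal characterization of the Bergman metric is a correct self-contained proof, and in fact yields strict inequality at \emph{every} nonzero tangent vector, which is stronger than the ``$\leq$ everywhere, $<$ somewhere'' meaning that $\lneq$ carries in Proposition~\ref{CKE-comparison}.
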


The geometric convexity of $E$ when $p> 1/2$ implies that the Carath\'eodory-Reiffen metric and the Kobayashi-Royden metric are the same (see \cite{Lem81}). Also It is known that \eqref{eq:CE} holds for any bounded domains in $\mathbb{C}^n$ (e.g, see \cite{JP13}). 

Since it is known that the Riemannian sectional curvature of the K\"ahler-Einstein metric on $E=E(1,1,p)$ is negatively pinched (see \cite{JSB86}), we have the following corollary which is related to the long-standing problem: 
\begin{coro}
There exists a simply connected, weakly (but not strictly) pseudoconvex domain in $\mathbb{C}^2$ with negative Riemannian sectional curvature range with respect to the K\"ahler-Einstein metric such that the Bergman metric, the K\"ahler-Einstein metric, the Kobayashi-Royden metric are uniformly equivalent but those are not proportional to each other. 
\end{coro}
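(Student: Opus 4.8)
The plan is to exhibit a single complex ellipsoid that meets every requirement, since the statement is purely existential and all the analytic content has already been established in Theorems~\ref{th:DE}, \ref{th:BE}, and~\ref{th:CE}. First I would fix an exponent $p>1$, say $p=2$, and take the domain $E=E(1,1,2)=\{(z,w)\in\mathbb{C}^2: |z|^2+|w|^4<1\}\subset\mathbb{C}^2$. This choice simultaneously satisfies the two numerical constraints needed below: $p>1/2$, which is required for the equivalence results and for Lempert's identity $\gamma_E=\chi_E$ (see \cite{Lem81}), and $p\neq 1$, which is required for the non-proportionality assertions.

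Next I would verify the geometric hypotheses for this fixed $E$. Since $p=2>1/2$, the map $t\mapsto t^{2p}$ is convex and increasing on $[0,\infty)$, so $|w|^{2p}$ is convex and $E$ is a bounded convex domain; in particular $E$ is contractible, hence simply connected, and it is pseudoconvex. To see that it is weakly but not strictly pseudoconvex, I would examine the boundary points $w=0$, $|z|=1$: a direct computation gives $\partial_w\partial_{\bar w}|w|^{2p}=p^2|w|^{2(p-1)}$, which vanishes at $w=0$ when $p>1$, so the Levi form has a null direction along $\partial/\partial w$ at these points. Thus $E$ carries weakly pseudoconvex boundary points and is not strictly pseudoconvex. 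Finally, the negative pinching of the Riemannian sectional curvature of $g^{KE}_E$ is exactly the content of \cite{JSB86} in this range, which supplies the negatively pinched curvature asserted in the statement.

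It then remains to assemble the metric comparisons. Uniform equivalence of $g^{KE}_E$, $g^{B}_E$, and $\chi_E$ is immediate from Theorem~\ref{th:DE} with $m=n=1$ and $p=2>1/2$. For non-proportionality I would argue pairwise: Theorem~\ref{th:BE} gives $g^{B}_E\neq\lambda g^{KE}_E$ for every $\lambda>0$ since $p\neq 1$; Theorem~\ref{th:CE} together with $\gamma_E=\chi_E$ gives $\chi_E\neq\lambda\sqrt{g^{KE}_E}$; and the same input, combined with the equivalence and the non-proportionality of $g^{B}_E$ and $g^{KE}_E$, rules out $\chi_E=\lambda\sqrt{g^{B}_E}$. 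Hence no two of the three metrics are proportional, and collecting these facts for $E=E(1,1,2)$ produces a domain with all the asserted properties.

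The argument is essentially bookkeeping, so there is no genuinely hard analytic step; the only point requiring care is the mutual consistency of the hypotheses. The main obstacle, such as it is, is confirming that weak (non-strict) pseudoconvexity and negatively pinched sectional curvature hold at a common exponent: the former forces $p>1$, while the equivalence and non-proportionality theorems require only $p>1/2$ and $p\neq 1$, and the curvature pinching of \cite{JSB86} is available throughout, so any $p>1$ — in particular $p=2$ — works.
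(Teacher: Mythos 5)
Most of your assembly is sound: $E(1,1,2)$ is convex, hence simply connected and pseudoconvex; the Levi form degenerates in the $\partial/\partial w$ direction at boundary points with $w=0$, $|z|=1$, so the domain is weakly but not strictly pseudoconvex; \cite{JSB86} supplies the negative pinching of $g^{KE}_E$; Theorem~\ref{th:DE} gives the uniform equivalence; Theorem~\ref{th:BE} gives $g^{B}_E\neq\lambda g^{KE}_E$; and Theorem~\ref{th:CE} with Lempert's identity $\gamma_E=\chi_E$ gives $\chi_E\neq\lambda\sqrt{g^{KE}_E}$. But your argument for the third pair is a genuine gap: you claim that $\chi_E\neq\lambda\sqrt{g^{B}_E}$ follows from ``the same input, combined with the equivalence and the non-proportionality of $g^{B}_E$ and $g^{KE}_E$.'' Non-proportionality does not propagate this way. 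The three statements $\chi_E=\lambda\sqrt{g^{B}_E}$, $\chi_E\neq\mu\sqrt{g^{KE}_E}$ for all $\mu$, and $g^{B}_E\neq\nu g^{KE}_E$ for all $\nu$ are mutually consistent: if $\chi_E$ were exactly $\lambda\sqrt{g^{B}_E}$, then $\chi_E\not\propto\sqrt{g^{KE}_E}$ would be \emph{equivalent} to $g^{B}_E\not\propto g^{KE}_E$, so the two facts you have established impose no constraint whatsoever on the pair $(\chi_E, g^{B}_E)$. You cannot get the third non-proportionality by bookkeeping from the other two.

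The paper closes this pair with a separate curvature argument, namely the corollary to Proposition~\ref{prop:hsc}: suppose $\chi_E=\lambda\sqrt{g^{B}_E}$ for some $\lambda>0$. By convexity and Lempert's theorem \cite{Lem81}, $\gamma_E=\chi_E$, so the Carath\'eodory--Reiffen metric would be proportional to the Bergman metric; Theorem 1 of \cite{BW77} then forces the Bergman metric to have constant holomorphic sectional curvature. This contradicts Proposition~\ref{prop:hsc}, which shows that at points $(z,0)$ with $z=\overline{z}$ one has $h(\partial/\partial z)=-\frac{2p}{1+2p}$ while $h(\partial/\partial w)=-\frac{1+4p+p^2}{(2+p)^2}$, and these agree only when $p=1$. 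Substituting this argument for your invalid inference makes your proof complete, and with that repair your route coincides with the paper's: choose $p>1$ (so $p\neq 1$ and $p>1/2$ hold simultaneously), invoke Theorems~\ref{th:DE}, \ref{th:BE}, \ref{th:CE}, the Section 4 corollary, and \cite{JSB86}.
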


\section{Prelinimaries}
In this section, we collect the necessary definitions that we use to prove our results. 

Let $G$ be a domain in $\mathbb{C}^n$. A pseudometric $F(z,u) : G \times \mathbb{C}^n \rightarrow [0,\infty]$ on a domain $G$ in $\mathbb{C}^n$ is called (biholomorphically) invariant if $F(z,\lambda u)=|\lambda|F(z,u)$ for all $\lambda \in \mathbb{C}^n$, and $F(z,u)=F(f(z),f’(z)u)$ for any biholomorphism $f : G \rightarrow G’$. The Caratheodory-Reiffen metric, Kobayashi-Royden metric, Bergman, K\"ahler-Einstein metric of negative scalar curvature are examples of invariant metrics on bounded weakly pseudoconvex domains in $\mathbb{C}^n$. 

Let $\mathbb{D}$ denote the open unit disk in $\mathbb{C}$. Let $z\in G$ and $v\in T_{z}G$ a tangent vector at $z$. Define the Carath\'eodory-Reiffen metric by
\[\gamma_{G}(z;v)=\sup\{|df(z)v| : f\in \Hol(G,\mathbb{D})  \}. \]
The Kobayashi-Royden metric is defined by 
\begin{equation}\label{eq:9}
\chi_{G}(z;v)=\inf\{\frac{1}{\alpha} : \alpha>0, f\in \Hol(\mathbb{D},G), f(0)=z, f'(0)=\alpha v\}.
\end{equation}

Let $\rho_{\mathbb{D}}(a,b)$ denotes the distance between two points $a,b\in \mathbb{D}$ with respect to the Poinc\'are metric of constant holomorphic sectional curvature $-4$.

The Carath\'eodory pseudo-distance $c_{G}$ on $G$ is defined by 
\[c_{G}(x,y):=\sup_{f\in \Hol(G,\mathbb{D})} \rho_{\mathbb{D}}(f(x),f(y)).\]
Here, $\rho_{\mathbb{D}}(a,b)$ denotes the integrated Poincar\'e-distance on the unit disk $\mathbb{D}$.

The Kobayashi pseudo-distance $k_{G}$ on $G$ is defined by 
\[k_{G}(x,y):=\inf_{f_i \in \Hol(\mathbb{D},G)}\left\{\sum_{i=1}^{n} \rho_{\mathbb{D}}(a_i,b_i) \right\} \]
where $x=p_0, ... ,p_n =y, f_i(a_i)=p_{i-1}, f_i(b_i)=p_i $.

The inner-Carath\'eodory length and the Kobayashi length of a piecewise $C^1$ curve $\sigma : [0,1] \rightarrow G$ are given by 
\[l^{c}(\sigma):=\int_{0}^{1}\gamma_{G}(\sigma,\sigma')dt,  \]
and
\[l^{k}(\sigma):=\int_{0}^{1}\chi_{G}(\sigma,\sigma')dt  \]
respectively. 
The inner-Carath\'eodory pseudo-distance and the inner-Kobayashi pseudo-distance on $G$ are defined by 
\[c^{i}_{G}(x,y):=\inf\{l^{c}(\sigma)(x,y) \} \text{ and } k^{i}_{G}(x,y):=\inf\{l^{k}(\sigma)(x,y) \}, \]
where the infimums are taken over all piece-wise $C^1$ curves in $G$ joining $x$ and $y$. 

The following relation is true in general: 
\begin{equation}\label{eq:10}
0\leq c_{{G}}\leq c^{i}_{G} \leq k^{i}_{G} = k_{G}. 
\end{equation}

Note that if $G$ is a bounded domain, then $k_{G}$, $c^{i}_{G}$, $c_{{G}}$ are non-degenerate and the topology induced by theses distances is the Euclidean topology. 

For a bounded domain $G$ in $\mathbb{C}^n$, denote $A^2(G)$ the holomorphic functions in $L^2(G)$. Let $\{\varphi_j : j \in \mathbb{N}  \}$ be an orthonormal basis for $A^2(G)$ with respect to the $L^2$-inner product. The Bergman kernel $K_G$ associated to $G$ is given by 
\begin{equation*}
K_{G}(z,\overline{z})=\sum_{j=1}^{\infty}\varphi_j(z)\overline{\varphi_j(z)}. 
\end{equation*}
Note that $K_{G}$ does not depend on the choice of orthonormal basis, gives rise to an invariant metric, the Bergman metric on $G$ as follows: 
\begin{equation}\label{eq:BGM}
g^B_{G}(\xi,\xi)={\sum_{\alpha,\beta =1}^{n}\frac{{\partial}^2 \log K_{G}(z,\overline{z}) }{\partial z_{\alpha}\partial \overline{z_{\beta}}}\xi_{\alpha}\overline{\xi_{\beta}}}.
\end{equation}

We say a domain $G \in \mathbb{C}^n$ is weakly pseudoconvex if $G$ has a continuous plurisubharmonic exhaustion function. In particular, every geometric convex set is a weakly pseudoconvex domain.

The existence of the complete K\"ahler-Einstein metric on a bounded pseudoconvex domain was given in the main theorem in \cite{NMY83}. Based on this result, we can always find the unique complete K\"ahler-Einstein metric of the Ricci curvature $-1$ on a bounded weakly pseudoconvex domain $G$. i.e., $g^{KE}_{G}$ satisfies $g^{KE}_{G}=-Ric_{g^{KE}_{G}}$ as a two tensor. 

\section{Equivalence of invariant metrics on ellipsoids}

To show the equivalence of Kobayashi-Royden metric, the K\"ahler-Einstein metric and the Bergman metric on $E=E(m,n,p)$, W. Yin's complete invariant K\"ahler metric $Y$ will be used. Precisely, $Y$ is the complete invariant K\"ahler metric $Y$ on $E=E(m,n,p)$ generated by the potential function
\begin{equation*}
K((z,w),\overline{(z,w)})=(1-X)^{-\lambda}(1-|z|^2)^{-N}
\end{equation*}
on $E=E(m,n,p)$, where $X=X(z,w)=|w|^2(1-|z|^2)^{-1/p}$, $N_1=(n+1)p+m$, $N=N_1/p$. Here, we will take $\lambda$ as $\lambda\geq \max\{m_1,m_2\}$, where 
\begin{align*}
m_1=&\max_{0\leq X<1}\{ \frac{F'(X)(1-X)}{F(X)} \}, \\
m_2=&\max_{0\leq X<1}\{ \frac{[F(X)F''(X)-{F(X)}^2](1-X)^2}{{F(X)}^2} \}.
\end{align*}
Then $Y$ satisfies 
\begin{equation*}
Y \geq g^{B}_{E}
\end{equation*}
(see \cite{WY97} for details). 
From  \cite{WY97}, it was shown that there exists $C>0$ such that the holomorphic sectional curvature of $Y$ on $E$ is bounded above by $-C$. Then by the generalized Schwarz lemma, there exists $C'>0$ satisfying
\begin{equation*}
\sqrt{Y(v,v)} \leq C' \chi_{E}(v)
\end{equation*}
for any vector $v \in TE$. Here, $C'$ can be taken by $\sqrt{\frac{2}{C}}$ (see the Lemma 19 in  \cite{DY17} or \cite{Y78}).
Consequently, with Lempert's classical theorem on convex domains (see \cite{Lem81}), 
\begin{equation}\label{eq:hsc1}
\sqrt{g^{B}_{E}(v)}\leq \sqrt{Y(v,v)} \leq C' \chi_{E}(v) = C' \gamma_{E}(v) \leq C' \sqrt{g^{B}_{E}(v,v)}
\end{equation}
for any vector $v \in TE$. 
We have observed that $Y$ is uniformly equivalent to the Bergman metric and the Kobayashi-Royden metric on $E$, and It remains to establish the equivalence between the K\"ahler-Einstein metric and $Y$. Now, once we have the following Proposition~\ref{prop:equiv_metric}, Theorem 3 in \cite{DY17} implies Theorem~\ref{th:DE} for $S=E$.

\begin{prop}\label{prop:equiv_metric}
There exists $D>0$ such that the holomorphic sectional curvature of $Y$ on $E$ is bounded below by $-D$.
\end{prop}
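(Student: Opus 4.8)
The plan is to exploit the homogeneity built into the construction of $Y$ in order to reduce the holomorphic sectional curvature to a function of a single non-compact parameter, after which only a boundary-limit computation remains. First I would record that $Y=\p\bar\p\log K$ is invariant under the full automorphism group $\textup{Aut}(E)$, and that the scalar $X=|w|^2(1-|z|^2)^{-1/p}$ is itself $\textup{Aut}(E)$-invariant: using the standard ball identity $1-|\phi_a(z)|^2=(1-|a|^2)(1-|z|^2)|1-\langle z,a\rangle|^{-2}$ one checks directly that the automorphisms $(z,w)\mapsto(\phi_a(z),(1-|a|^2)^{1/(2p)}(1-\langle z,a\rangle)^{-1/p}w)$, together with $U(n)\times U(m)$, preserve $X$. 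Combining the transitivity of the $\phi_a$ on the $z$-ball with the $U(m)$-action on $w$, every point of $E$ can be moved to a normalized point $q_t=(0,(t,0,\dots,0))$ with $t^2=X\in[0,1)$, the residual isotropy at $q_t$ containing $U(n)\times U(m-1)$ (and the circle action relating basepoints of equal $X$, which merely simplifies formulas). Hence the holomorphic sectional curvature $H_Y(\xi)=R^Y(\xi,\bar\xi,\xi,\bar\xi)/Y(\xi,\bar\xi)^2$, a smooth function on the unit tangent sphere bundle, descends to a continuous (indeed real-analytic) function $H(X,\xi)$ on $[0,1)\times S^{2(n+m)-1}$; since $Y$ is positive definite the denominator never vanishes, and we already know $H\le -C<0$.

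Next I would carry out the curvature computation at $q_t$ via the K\"ahler formula
\[
R^Y_{\alpha\bar\beta\gamma\bar\delta}=-\,\p_\alpha\p_{\bar\beta}\p_\gamma\p_{\bar\delta}\log K+Y^{\mu\bar\nu}\,\bigl(\p_\alpha\p_\gamma\p_{\bar\nu}\log K\bigr)\bigl(\p_{\bar\beta}\p_{\bar\delta}\p_\mu\log K\bigr).
\]
At $q_t$ the vanishing of the first $z$-derivatives of $X$ (because $\p_z X\propto\bar z$) collapses all mixed $z$–$w$ terms and makes $Y$ block-diagonal, leaving expressions rational in $X$ and in the components of $\xi=(\xi^z,\xi^w)$, with the singular behaviour confined to common powers of $(1-X)^{-1}$; e.g. $Y_{w_1\bar w_1}=\lambda(1-X)^{-2}$, $Y_{w_a\bar w_a}=\lambda(1-X)^{-1}$ for $a\ge2$, and $Y_{z_i\bar z_j}=\bigl(N+\tfrac{\lambda X}{p(1-X)}\bigr)\delta_{ij}$. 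The upshot is an explicit formula $H(X,\xi)=\Phi(X,\xi)/\Psi(X,\xi)^2$ in which the powers of $(1-X)$ cancel between numerator and denominator.

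Finally I would establish the two-sided bound. On $[0,1-\varepsilon]\times S^{2(n+m)-1}$ the function $H$ is bounded by continuity and compactness; in particular the weakly pseudoconvex locus $\{w=0,\ |z|=1\}$ causes no trouble, since by invariance it lies in the closure of the single orbit $\{X=0\}$ and contributes only the finite value $H(0,\cdot)$. It therefore remains to control $H$ as $X\to 1^-$, i.e.\ along the strictly pseudoconvex part of $\p E$. Here I would show that after cancelling the common $(1-X)$-powers the ratio $\Phi/\Psi^2$ extends continuously to $X=1$ with a finite limit for every unit direction $\xi$ — reflecting the asymptotically complex-hyperbolic behaviour of $Y$ near the smooth boundary — so that $H$ extends to a continuous function on the compact set $[0,1]\times S^{2(n+m)-1}$, whence it is bounded below by some $-D$. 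The main obstacle is precisely this uniform-in-$\xi$ limit: the delicate case is that of mixed directions with both $\xi^z\neq0$ and $\xi^{w}\neq0$, where one must track the competition between the $(1-X)^{-2}$ and $(1-X)^{-1}$ entries of $Y$ together with the inverse metric $Y^{\mu\bar\nu}$ in the second term of the curvature formula, and verify that no choice of $\xi$ drives the ratio to $-\infty$.
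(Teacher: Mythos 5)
There is a genuine gap: your argument is a reduction plus an acknowledged open step, not a proof. The reduction itself is sound and matches the paper's first move --- the holomorphic sectional curvature is automorphism-invariant, every point of $E$ can be normalized so that $z=0$, and the curvature becomes a function of $X\in[0,1)$ and the direction $\xi$; your block-diagonal formulas for $Y$ at the normalized point (e.g.\ $Y_{w_1\bar w_1}=\lambda(1-X)^{-2}$) are correct. Compactness then handles $X\in[0,1-\varepsilon]$ for free. But the entire content of the proposition lives in the regime $X\to 1^-$, and there you only write that you ``would show'' the ratio $\Phi/\Psi^2$ extends continuously to $X=1$ uniformly in $\xi$, explicitly calling the mixed directions ``the main obstacle.'' Identifying the obstacle is not the same as overcoming it: for mixed $\xi$ the numerator and denominator each carry competing powers $(1-X)^{-1},(1-X)^{-2},\dots,(1-X)^{-4}$, and whether the quotient stays bounded below is exactly the assertion to be proved. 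As it stands, the proposal would be accepted only up to the point where the theorem actually begins.

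The paper closes this gap by a different and purely algebraic device, using W.~Yin's explicit formula at $z=0$: the curvature is written as $-D-\omega_1/[\,\cdot\,]^2$, where the denominator is a square and, crucially, the coefficients $P^{*}_1,P^{*}_{12},P^{*}_2,Q^{*}_1,Q^{*}_2,R^{*}$ appearing in $\omega_1$ themselves depend on the free parameter $D$. One then shows by elementary one-variable calculus in $y=(1-X)^{-1}\in[1,\infty)$ (monotonicity and sign of a few quadratics) that for
\begin{equation*}
D\geq \max\{\,2(aN_1)^{-1},\ 2N_1^{-1},\ (p+1)/N_1,\ 2p/N_1\,\}
\end{equation*}
all six coefficients are non-positive, hence $\omega_1\leq 0$ and the curvature is $\geq -D$ pointwise, for every $X\in[0,1)$ and every direction simultaneously. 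No limit as $X\to1$, no uniformity-in-$\xi$ argument, and no compactness step is needed. If you want to complete your approach without Yin's formula you must carry out the equivalent computation yourself: expand the curvature tensor at $q_t$, exhibit the cancellation of the $(1-X)$-powers for \emph{arbitrary} mixed directions, and bound the resulting rational function of $(X,\xi)$ from below --- which is precisely the work your proposal defers.
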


\begin{proof}
	Note that the holomorphic sectional curvature is invariant under the biholomorphic maps and for any $(z,w)\in E$, there exists an automorphism $f$ on $E$ such that $f(z,w)=(0,w')$. Thus it suffices to compute the holomorphic sectional curvature when $z=0$. The formula of the holomorphic sectional curvature $\omega[(z,w),d(z,w)]_{z=0}$ of $Y$ is explcitly given in \cite{WY97}: for any $D>0$, 
	\begin{equation*}
	\omega[(z,w),d(z,w)]_{z=0}=-D-\frac{\omega_1[(z,w),d(z,w)]}{[p^{-1}(XW'+N_1)|dz|^2+W'|dw|^2+W''|w\overline{dw}|^2]^2},
	\end{equation*}
	where $W'=\lambda(1-X)^{-1}$, $W''=\lambda(1-X)^{-2}$,
\begin{align*}
	\omega_1[(z,w),d(z,w)]=&P^{*}_1|w\overline{dw}|^4+P^{*}_{12}|w\overline{dw}|^2|{dw}|^2+P^{*}_2|{dw}|^4 \\ 
		+&Q^{*}_1|{dz}|^2|{dw}|^2+Q^{*}_2|{wdw}|^2|{dz}|^2+R^{*}|dz|^4,
\end{align*}

and $P^{*}_1$, $P^{*}_{12}$, $P^{*}_2$, $Q^{*}_1$, $Q^{*}_2$, $R^{*}$ are explicitly given as follows:
\begin{align}
	P^{*}_1=&aN_1(1-X)^{-4}(2-DaN_1), \nonumber \\
		P^{*}_{12}=&2aN_1(1-X)^{-3}(2-DaN_1),  \nonumber\\
			P^{*}_{2}=&aN_1(1-X)^{-2}(2-DaN_1), \nonumber\\
			Q^{*}_{1}=&2p^{-1}aN_1(1-X)^{-1}[(2-DaN_1)(1-X)^{-1}-DN_1(1-a)], \nonumber \\ 
				Q^{*}_{2}=&2p^{-1}(XW'+N_1)^{-1}(1-X)^{-2}aN^2_1 [(1-X)^{-2}(2a-Da^2N_1) \label{eq:hsc3} \\ 
					+&(1-X)^{-1}[4(1-a)-2DaN_1(1-a)]-D(1-a)^2N_1], \nonumber \\ 
						R^{*}=&p^{-2}N_1a[(1-X)^{-2}(2-DaN_1)+(1-X)^{-1}2(p-1 \nonumber\\
							+&DN_1(a-1))+(1-1/a)[-2p-D(a-1)N_1]			 \nonumber
\end{align}
	with $\lambda=aN_1$. We will claim that $P^{*}_1$, $P^{*}_{12}$, $P^{*}_2$, $Q^{*}_1$, $Q^{*}_2$ and $R^{*}$ are all non-positive for some $D>0$. For simplicity, let $y:=(1-X)^{-1}$.
	
 For $P^{*}_1$, $P^{*}_{12}$ and $P^{*}_2$, take $D>2(aN_1)^{-1}$. Then $X \in [0,1)$ implies $y\in [1,\infty]$, thus $P^{*}_1$, $P^{*}_{12}$ and $P^{*}_2$ are non-positive.
	
For $Q^{*}_1$, let $f_{Q^{*}_1}(y)=(2-DaN_1)y-DN_1(1-a)$. Since $D \geq 2(aN_1)^{-1}$, $f_{Q^{*}_1}(y)$ is decreasing which has the maximum $f_{Q^{*}_1}(1)$ on $[1,\infty]$. Then $f_{Q^{*}_1}(1)\leq 0$ is guaranted by taking $D \geq 2(N_1)^{-1}$.
	
For $Q^{*}_2$, let $f_{Q^{*}_2}(y)=(2a-Da^2N_1)y^2+2(1-a)(2-DaN_1)y-D(1-a^2)N_1$ from \eqref{eq:hsc3}. If $D>2(aN_1)^{-1}$, then $f_{Q^{*}_2}(y)$ has a maximum value $f_{Q^{*}_2}(y_0)=-2a^{-1}(a-1)^2<0$, where $y_0=1-1/a$.
	
	Finally, for $R^{*}$, consider $f_{R^{*}}(y)=(2-DaN_1)y^2+2(p-1+DN_1(a-1))y+(1-1/a)(-2p-D(a-1)N_1)$. Then $f_{R^{*}}'(y)=2(2-DaN_1)y+2(p-1+DaN_1-DN_1)$, thus $f_{R^{*}}'(y)$ is decreasing on $[1,\infty]$ if $D \geq 2(aN_1)^{-1}$. $f_{R^{*}}'(1)=2(p+1-DN_1)\leq 0$ if $D\geq (p+1)/N_1$, and $f_{R^{*}}(1)=(1/a)(2p-DN_1)\leq 0$ if $D \geq (2p/N_1)$. Hence if $D\geq \max\{ 2(aN_1)^{-1}, (p+1)/N_1,(2p/N_1)\}$ then $f_{R^{*}}(y)\leq 0$. 	
	
	In all, if $D\geq \max\{ 2(aN_1)^{-1}, 2(N_1)^{-1}, (p+1)/N_1,(2p/N_1)\}$, then $P^{*}_1$, $P^{*}_{12}$, $P^{*}_2$, $Q^{*}_1$, $Q^{*}_2$ and $R^{*}$ are all non-positive. Hence we obtain 
	\begin{equation*}
	\omega[(z,w),d(z,w)]_{z=0} \geq -D.
	\end{equation*}
\end{proof}
\begin{rema}
From the proof of Theorem~\ref{th:DE} with Theorem 3 in \cite{DY17}, $C>0$ only depends on the negative holomorphic sectional curvature range of the W. Yin's complete invariant K\"ahler metric in \cite{WY97}. In the special case of $E=E(m,1,p)$ with $p \geq 1$, we can also use the negative Riemannian sectional curvature range of the K\"ahler-Einstein metric on $E$ to determine $C>0$ (see Theorem 4 in \cite{JSB86}). 
\end{rema}
\begin{rema}
Theorem~\ref{th:DE} can be proved by an alternative approach, which combines Theorem 1 in \cite{KKLZ16} and Theorem 7.2 in \cite{KLXSSTY04} or Theorem 2 in \cite{SKY09}. However, our approach has the further consequence which holds to any closed complex submanifold $S$ in $E=E(m,n,p)$: Since we can restrict W. Yin's complete invariant K\"ahler metric $Y$ to $S$, which still has the negative pinched holomorphic sectional curvature range on $S$. Then by Theorem 2 and Theorem 3 in \cite{DY17}, we have Corollary~\ref{cor:from_main}.
\end{rema}

\section{Bergman metric and its curvatures for two-dimensional ellipsoids}
In this section, we will investigate the two-dimensional complex ellipsoid $E=E(1,1,p)=\{(z,w)\in {\mathbb{C}^1}\times{\mathbb{C}^1} : |z|^2+|w|^{2p}<1 \},p>0$. In order to prove Theorem~\ref{th:BE}, we will provide a detailed description of curvature tensors of the Bergman metric near to the special boundary points $|z|=1$ on $E$. Recall that with the global coordinate $(z,w)\in E$ in $\mathbb{C}^{2}$, let $\{\frac{\partial}{\partial{z}},\frac{\partial}{\partial{w}} \}$ be the basis on $T^{1,0}_{0}E$.
In \cite{JPDA94}, the formula of the Bergman kernel $K_{E}$ on $E=E(1,1,p)=\{(z,w)\in {\mathbb{C}^1}\times{\mathbb{C}^1} : |z|^2+|w|^{2p}<1 \}$ is explicitly known: 
\begin{equation}\label{eq:EBK}
K_{E}((z,w),\overline{(z,w)})=c_1 \frac{(1-|z|^2)^{-2+\frac{1}{p}}}{((1-|z|^2)^{1/p}-|w|^2)^2}+c_2 \frac{(1-|z|^2)^{-2+\frac{2}{p}}}{((1-|z|^2)^{1/p}-|w|^2)^3},
\end{equation}
where $c_2$, $c_1$ are given by 
\begin{equation*}
c_{1}=\frac{1}{{p\pi}^{2}}(p-1), c_{2}=\frac{2}{{p\pi}^{2}}.
\end{equation*}
For computations, define $\phi(z,\overline{z},w,\overline{w})=1-z\overline{z}$ and  $\psi(z,\overline{z},w,\overline{w})=(1-z\overline{z})^{\frac{1}{p}}-w\overline{w}$. Then $K_E=c_1 \phi^{-2+\frac{1}{p}}\psi^{-2}+c_2\phi^{-2+\frac{2}{p}}\psi^{-3}$. 
For convenience, let's establish the notation. We will denote $\frac{\partial}{\partial z}$ by $\partial_{1}$, and $\frac{\partial}{\partial \overline{z}}=:\partial_{\overline{1}}$, $\frac{\partial}{\partial \overline{w}}=:\partial_{\overline{2}}$, $\frac{\partial}{\partial \overline{w}}=:\partial_{\overline{2}}$. 
From \eqref{eq:BGM}, the metric component $g^{B}_{i\overline{j}}$ of $g^{B}_{E}$ is given by 
\begin{equation}\label{eq:metric}
g^{B}_{i\overline{j}}=\frac{{\partial}^2 \log K_{E}(z,\overline{z}) }{\partial z_{i}\partial \overline{z_{j}}}=K^{-2}_{E}(K_{E}{{\partial^2_{i\overline{j}}}  K_{E}}-{{\partial_i}  K_{E}}{\partial _{\overline{j}}}{ K_{E}}), i=1,2.
\end{equation}
Hence with \eqref{eq:EBK}, the elementary computation gives the following proposition.
\begin{prop}\label{prop:metric}
The components of $g^{B}_{E}$ for any $(z,w)\in E$ are given as follows:
\begin{align*}
g_{1\overline{1}}&=a_1 a_2 a_3, \\
g_{1\overline{2}}&=a_2a_4 a_5, \\
g_{2\overline{1}}&=a_2 a_4 a_6, \\
g_{2\overline{2}}&=a_2 a_4 a_7, 
\end{align*}
where each $a_{i\overline{j},k}$ is a function of $(z,\overline{z},w,\overline{w})$ as below:
\begin{align*}
a_1=&2 p^4 \psi^4+5 p^3 \psi^3 (\phi^{\frac{1}{p}}+w\overline{w})\\
&+2 p^2\psi^2 (w\overline{w} (z\overline{z}+5) \phi^{\frac{1}{p}}+2\phi^{\frac{2}{p}}+2 (w\overline{w})^2) \\
&+4 z\overline{z}w\overline{w} \phi^{\frac{1}{p}} (w\overline{w} \phi^{\frac{1}{p}}+\phi^{\frac{2}{p}}+(w\overline{w})^2)\\
&+p (\phi^{\frac{2}{p}}-(w\overline{w})^2)(2 w\overline{w} (3 z\overline{z}+2) \phi^{\frac{1}{p}}+\phi^{\frac{2}{p}}+(w\overline{w})^2),  \\
a_2=&\psi^{-2} (p \psi+\phi^{\frac{1}{p}}+w\overline{w})^{-2}, \\
a_3=&p^{-2} \phi^{-2}, \\
a_4=&p^2 \psi^2+3p(\phi^{\frac{2}{p}}-(w\overline{w})^2)+2(w\overline{w} \phi^{\frac{1}{p}}+\phi^{\frac{2}{p}}+(w\overline{w})^2), \\
a_5=&2p^{-1}\overline{z}w \phi^{-1+\frac{1}{p}}, \\
a_6=&2p^{-1}\overline{w}z \phi^{-1+\frac{1}{p}}, \\ 
a_7=&2 \phi^{\frac{1}{p}}.
\end{align*}
\begin{proof}
Notice that all of the first-order derivatives and second-order derivatives of $K_E$ are written as linear combinations of $h\phi^{\alpha}\psi^{\beta}$ with some functions $h=h(z,\overline{z},w,\overline{w})$ and $\alpha,\beta \in \mathbb{R}$. We will provide each term of those in order to proceed the computation: the first-order derivatives of $\phi$ and $\psi$ are given as follows:
\begin{align*}
\partial_1 \phi&=-\overline{z}, \partial_{\overline{1}} \phi=-z, \partial_{2} \phi=\partial_{\overline{2}} \phi=0, \\
\partial_{1} \psi&=-\frac{1}{p}\overline{z}\phi^{\frac{1}{p}-1}, \partial_{\overline{1}} \psi=-\frac{1}{p}{z}\phi^{\frac{1}{p}-1}, \partial_{2} \psi=-\overline{w}, \partial_{\overline{2}}\psi=-w.
\end{align*}
Then by direct computations, the first-order derivatives of $K_E$ are written as:
\begin{align*}
\partial_{1} K_E=&\frac{(p-1) (2 p-1)}{\pi ^2 p^2}\overline{z}\phi^{-3+\frac{1}{p}}\psi^{-2}+\frac{2 (3 p-5)}{\pi ^2 p^2}\overline{z}\phi^{-3+\frac{2}{p}}\psi^{-3}+\frac{6}{\pi ^2 p^2}\overline{z}\phi^{-3+\frac{3}{p}}\psi^{-4}, \\
\partial_{\overline{1}} K_E=&\frac{(p-1) (2 p-1)}{\pi ^2 p^2}{z}\phi^{-3+\frac{1}{p}}\psi^{-2}+\frac{2 (3 p-5)}{\pi ^2 p^2}{z}\phi^{-3+\frac{2}{p}}\psi^{-3}+\frac{6}{\pi ^2 p^2}{z}\phi^{-3+\frac{3}{p}}\psi^{-4}, \\ 
\partial_{2} K_E=&\frac{2(p-1)}{p\pi^2}\overline{w}\phi^{-2+\frac{1}{p}}\psi^{-3}+\frac{6}{p\pi^2}\overline{w}\phi^{-2+\frac{2}{p}}\psi^{-4},\\
\partial_{\overline{2}} K_E=&\frac{2(p-1)}{p\pi^2}{w}\phi^{-2+\frac{1}{p}}\psi^{-3}+\frac{6}{p\pi^2}{w}\phi^{-2+\frac{2}{p}}\psi^{-4}.
\end{align*}
The second-order derivatives of $K_E$ are written as:
\begin{align*}
\partial^2_{1\overline{1}}K_E=&\frac{(p-1) (2 p-1)}{\pi ^2 p^2}\phi^{-3+\frac{1}{p}}\psi^{-2}+\frac{6 (p-1)}{\pi ^2 p^2}\phi^{-3+\frac{2}{p}}\psi^{-3}+\frac{6}{\pi ^2 p^2}\phi^{-3+\frac{3}{p}}\psi^{-4} \\
&+\frac{(p-1) (2 p-1) (3 p-1)}{\pi ^2 p^3}\phi^{-4+\frac{1}{p}}\psi^{-2}+\frac{2 (p-1) (11 p-7)}{\pi ^2 p^3}z\overline{z}\phi^{-4+\frac{2}{p}}\psi^{-3} \\
&+\frac{6 (p-1) (3 p+1)}{\pi ^2 p^3}z\overline{z}\phi^{-4+\frac{3}{p}}\psi^{-4}+\frac{24}{\pi ^2 p^3}z\overline{z}\phi^{-4+\frac{4}{p}}\psi^{-5},\\
\partial^2_{1\overline{2}}K_E=&\frac{4 p^2-6 p+2}{\pi ^2 p^2}w\overline{z}\phi^{-3+\frac{1}{p}}\psi^{-3}+\frac{18 (p-1)}{\pi ^2 p^2}w\overline{z}\phi^{-3+\frac{2}{p}}\psi^{-4}+\frac{24}{\pi ^2 p^2}{p}w\overline{z}\phi^{-3+\frac{3}{p}}\psi^{-5}, \\ 
\partial^2_{2\overline{1}}K_E=&\frac{4 p^2-6 p+2}{\pi ^2 p^2}z\overline{w}\phi^{-3+\frac{1}{p}}\psi^{-3}+\frac{18 (p-1)}{\pi ^2 p^2}z\overline{w}\phi^{-3+\frac{2}{p}}\psi^{-4}+\frac{24}{\pi ^2 p^2}z\overline{w}\phi^{-3+\frac{3}{p}}\psi^{-5}, \\ 
\partial^2_{2\overline{2}}K_E=&\frac{2(p-1)}{p\pi^2}\phi^{-2+\frac{1}{p}}\psi^{-3}+\frac{6(p-1)}{p\pi^2} w\overline{w}\phi^{-2+\frac{1}{p}}\psi^{-4}+\frac{6}{p\pi^2}\phi^{-2+\frac{2}{p}}\psi^{-4}+\frac{24}{p\pi^2}w\overline{w}\phi^{-2+\frac{2}{p}}\psi^{-5}.
\end{align*}

Notice that $\phi^{-4+\frac{2}{p}}\psi^{-6}$ becomes the common factor to the numerator and the denominator of $\frac{K_{E}{{\partial^2_{i\overline{j}}}  K_{E}}-{{\partial_i}  K_{E}}{\partial _{\overline{j}}}{ K_{E}}}{{K_{E}}^2}$ for indices  $(i,\overline{j})=(1,\overline{1}),(1,\overline{2}),(2,\overline{1}),(2,\overline{2})$. Thus by multiplying $\psi^{2}$ on both sides after canceling the common factor $\phi^{-4+\frac{2}{p}}\psi^{-6}$, we get the common denominator-term $a_2$ for any $g_{i\overline{j}}$. Other two terms $a_k,a_l$ of $g_{i\overline{j}}=a_2a_k a_l$ can be obtained from direct computation. For the rest of the proof, we proceed the concrete computation for $g_{1\overline{1}}$ and other metric components follow from similar computations. 
From \eqref{eq:metric},
\begin{align*}
g_{1\overline{1}}&=\frac{K_{E}{{\partial^2_{1\overline{1}}}  K_{E}}-{{\partial_1} K_{E}}{\partial _{\overline{1}}}{ K_{E}}}{{K_{E}}^2}=\frac{{\phi}^{-4+\frac{2}{p}}{\psi}^{-6} {\widehat{(K_{E}{{\partial^2_{1\overline{1}}}  K_{E}}}-\widehat{{{\partial_1} K_{E}}{\partial _{\overline{1}}}{ K_{E}})}} }{{\phi}^{-4+\frac{2}{p}}{\psi}^{-6}(\widehat{{K_{E}}^2)}}\\
&=\frac{{\psi^2 \widehat{(K_{E}{{\partial^2_{1\overline{1}}}  K_{E}}}-\widehat{{{\partial_1} K_{E}}{\partial _{\overline{1}}}{ K_{E}})}} }{\psi^2 (\widehat{{K_{E}}^2)}},
\end{align*}
where 
\begin{equation*}
\psi^2 (\widehat{{K_{E}}^2)}=\psi^{2} (p \psi+\phi^{\frac{1}{p}}+w\overline{w})^{2}=\frac{1}{a_2},
\end{equation*}
and
\begin{align}
&\psi^2 ({\widehat{K_{E}{{\partial^2_{1\overline{1}}}  K_{E}}}-\widehat{{{\partial_1} K_{E}}{\partial _{\overline{1}}}{ K_{E}}}}) \label{eq:a11}  \\ 
&={\frac{(p-1)^2(2p-1)(3p-1)-(2p^2-3p+1)^2 z\overline{z} }{p^2}  }\phi^{-2}\psi^{4}+{\frac{(2p^2-p)(p-1)^2}{p^2}}\phi^{-1}\psi^{4} \nonumber \\
&{\frac{-2(p-1)(p^2-8p+3)z\overline{z}+2(p-1)(2p-1)(3p-1)}{p^2} }\phi^{-2+\frac{1}{p}}\psi^{3}+\frac{10p^3-18p^2+8p}{p^2}\phi^{-1+\frac{1}{p}}\psi^{3}  \nonumber\\
&+{\frac{18(p^2-p)}{p^2}}\phi^{-1+\frac{2}{p}}\psi^{2}+{\frac{18p^3-46p^2+90p+78}{p^2}}z\overline{z}\phi^{-2+\frac{2}{p}}\psi^{2} \nonumber\\
&+{\frac{12(3p^2-6p+7)}{p^2}}z\overline{z}\phi^{-2+\frac{3}{p}}\psi+{\frac{12}{p}}\phi^{-1+\frac{3}{p}}\psi+{\frac{12}{p^2}}z\overline{z}\phi^{-2+\frac{4}{p}} \nonumber.
\end{align}
Take $a_3=\frac{1}{p^2\phi^2}$, then one can see that with $z\overline{z}=1-\phi$, the term of the highest degree of $\psi$ of the rest terms of \eqref{eq:a11} becomes $2p^4\psi^4$ and further simplication with $\psi=\phi^{1/p}-w\overline{w}$ yields $a_1$.
\end{proof}
\end{prop}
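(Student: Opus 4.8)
The plan is to compute each metric component directly from \eqref{eq:metric}, namely $g^B_{i\bar j} = K_E^{-2}\bigl(K_E\,\partial^2_{i\bar j}K_E - \partial_i K_E\,\partial_{\bar j}K_E\bigr)$, exploiting that every derivative of the kernel \eqref{eq:EBK} is a finite linear combination of monomials $h\,\phi^\alpha\psi^\beta$ with $h$ a monomial in $z,\bar z,w,\bar w$. First I would record the elementary derivatives of $\phi = 1-z\bar z$ and $\psi = \phi^{1/p}-w\bar w$: one has $\partial_1\phi = -\bar z$, $\partial_1\psi = -\frac1p\bar z\,\phi^{1/p-1}$, $\partial_2\psi = -\bar w$, together with their conjugates, and $\phi$ is independent of $w,\bar w$. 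Differentiating a power $\phi^\alpha\psi^\beta$ by the product and chain rules returns another such linear combination, so the first- and second-order derivatives of $K_E$ close up into explicit tables of $\phi^\alpha\psi^\beta$-terms, which I would write out.

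The structural backbone of the argument is the factorization $K_E = (p\pi^2)^{-1}\phi^{-2+1/p}\psi^{-3}\bigl(p\psi+\phi^{1/p}+w\bar w\bigr)$, obtained by collecting the two summands of \eqref{eq:EBK} over a common power of $\psi$ and simplifying the bracket via $\psi = \phi^{1/p}-w\bar w$. This makes transparent why both $K_E^2$ and the Hessian numerator $K_E\,\partial^2_{i\bar j}K_E - \partial_iK_E\,\partial_{\bar j}K_E$ carry the common factor $\phi^{-4+2/p}\psi^{-6}$: cancelling it clears all negative powers, and the surviving denominator, after restoring a factor $\psi^2$, is exactly $a_2^{-1} = \psi^2\bigl(p\psi+\phi^{1/p}+w\bar w\bigr)^2$. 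Thus each $g_{i\bar j}$ acquires the common factor $a_2$, and it remains only to factor the reduced numerators.

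For the off-diagonal entries and $g_{2\bar 2}$ this factoring is direct: the antiholomorphic/holomorphic prefactors $a_5 = 2p^{-1}\bar z w\,\phi^{-1+1/p}$, $a_6$, $a_7$ peel off from the monomial structure, leaving the common polynomial $a_4$. The delicate case is $g_{1\bar 1}$, where the reduced numerator \eqref{eq:a11} is a sum of nine $\phi^\alpha\psi^\beta$-terms carrying $z\bar z$-dependent coefficients. Here I would substitute $z\bar z = 1-\phi$ to trade $z\bar z$ for $\phi$, collect by powers of $\psi$, and then use $w\bar w = \phi^{1/p}-\psi$ to consolidate; the top coefficient collapses to $2p^4\psi^4$ and the whole expression reorganizes into $a_1a_3$ with $a_3 = p^{-2}\phi^{-2}$.

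The main obstacle is entirely computational rather than conceptual: verifying that \eqref{eq:a11}, after the substitutions $z\bar z = 1-\phi$ and $w\bar w = \phi^{1/p}-\psi$, reassembles into the compact polynomial $a_1$ demands careful tracking of many cross terms and is highly error-prone by hand. The only tools required are these two defining identities together with the product and chain rules, so I would organize the bookkeeping by descending powers of $\psi$ (from $\psi^4$ down to $\psi^0$) and cross-check the final form with a symbolic algebra system.
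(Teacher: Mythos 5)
Your proposal is correct and follows essentially the same route as the paper's proof: tabulate the first- and second-order derivatives of $K_E$ as combinations of $\phi^{\alpha}\psi^{\beta}$, cancel the common factor $\phi^{-4+\frac{2}{p}}\psi^{-6}$ from numerator and denominator to expose the common term $a_2^{-1}=\psi^{2}(p\psi+\phi^{\frac{1}{p}}+w\overline{w})^{2}$, and then factor the reduced numerators via the substitutions $z\overline{z}=1-\phi$ and $w\overline{w}=\phi^{\frac{1}{p}}-\psi$. Your explicit factorization $K_E=(p\pi^{2})^{-1}\phi^{-2+\frac{1}{p}}\psi^{-3}\bigl(p\psi+\phi^{\frac{1}{p}}+w\overline{w}\bigr)$ is a correct and slightly tidier way of explaining \emph{why} that common factor and denominator appear (the paper merely asserts it), but it does not alter the substance of the argument.
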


From this proposition, we can observe that two vector fields $\frac{\partial}{\partial z}$ and $\frac{\partial}{\partial \overline{w}}$ are orthogonal to each other with respect to the Bergman metric $g^{B}_E$ if $a_5=a_6=0$. Hence it is reasonable to evaluate the curvature tensors with the choice of points $(z,w)\in E$ satisfying
\begin{equation*}
z=\overline{z} \text{ and } w=0.
\end{equation*}
or 
\begin{equation*}
w=\overline{w} \text{ and } z=0.
\end{equation*}
 Since it's interesting to know those tensor components on $C^2$ weakly pseudoconvex boundary points $|z|=1$ in the case $p>1$, we proceed the computation with the former case.

First, let us compute the components of Ricci curvature tensor of the Bergman metric $g^{B}_{E}$. From the well-known formula of the Ricci curvature tensor with the K\"ahler metric $g^{B}_{E}$, 
\begin{align}
\text{Ric}_{i\overline{j}}(g^{B}_{E})&=-{{\partial_{i\overline{j}}}  \log \det{g^{B}_{E}}} \nonumber \\
&=(\det{g^{B}_{E}})^{-2}({{\partial_i} \det{g^{B}_{E}}}{{\partial_{\overline{j}}} \det{g^{B}_{E}}}-(\det{g^{B}_{E}}){{\partial_{i\overline{j}}}  \det{g^{B}_{E}}}). \label{eq:Ric}
\end{align}

By previous propositions, many terms in the components of Ricci curvatures of $g^{B}_{E}$ are vanished, so that we can compute to obtain the following propositions:

\begin{prop}\label{prop:Ric}
On the point $(z,w)\in E$ satisfying $z=\overline{z},w=0$, the components of the Ricci tensor of $g^{B}_{E}$ are given as follows:
\begin{align*}
	\Ric_{1\overline{1}}&=-\frac{2p+1}{p \left(1-z\overline{z}\right)^2}, \\
		\Ric_{1\overline{2}}&=\Ric_{2\overline{1}}=0, \\
			\Ric_{2\overline{2}}&=-\frac{2 \left(2 p^3+10 p^2+10 p+5\right)}{(p+1)
			(p+2) (2 p+1)\left(1-z\overline{z}\right)^{1/p}}.
\end{align*}
\begin{proof}
From \eqref{eq:Ric}, we should establish the formulas of the zero, first, and second-order derivatives of $\det{g^{B}_{E}}$. To do so, from the formula
\begin{equation*}
\det{g^{B}_{E}}=g_{1\overline{1}}g_{2\overline{2}}-g_{1\overline{2}}g_{2\overline{1}}=a_1(a_2)^2a_3a_4a_7-{a_2}^2{a_4}^2a_5a_6,
\end{equation*}
it is necessary to determine all formulas of zero, first, and second-order derivatives of $a_i,i=1,...,7$ which are given in Proposition~\ref{prop:metric}. 	
For the zero-order derivatives of $a_i$'s, by putting $z=\overline{z}, w=0$ in formulas in Proposition~\ref{prop:metric}, we have:
\begin{align*}
a_1&=p (p+1)^2 (2 p+1) \left(1-z\overline{z}\right)^{4/p}, \\
a_2&=\frac{1}{(p+1)^2\left(1-z\overline{z}\right)^{4/p}}, a_3=\frac{1}{p^2 \left(1-z\overline{z}\right)^2},
\\
a_4&=\left(p^2+3 p+2\right) \left(1-z\overline{z}\right)^{2/p}, a_7=2 \left(1-z\overline{z}\right)^{\frac{1}{p}}, \\ 
a_i&=0, i=5,6
\end{align*}
Then by using $a_5=a_6=0$, 
\begin{equation*}
\det{g^{B}_{E}}=a_1(a_2)^2a_3a_4a_7=\frac{2 (2 p+1) \left(p^2+3 p+2\right) \left(1-z\overline{z}\right)^{-1/p}}{p (p+1)^2
	\left(z\overline{z}-1\right)^2}.
\end{equation*}
Next, we compute the first-order derivatives of $a_i$'s, then substituting $z=\overline{z},w=0$ yields the following:
\begin{align*}
\partial_1 a_{1}&=\partial_{\overline{1}} a_{1}=-4 (p+1)^2 (2 p+1) z\left(1-z\overline{z}\right)^{\frac{4}{p}-1}, \\
\partial_1 a_{2}&=\partial_{\overline{1}} a_{2}=\frac{4 z \left(1-z\overline{z}\right)^{-\frac{4}{p}-1}}{p (p+1)^2}, \\ 
\partial_1 a_{3}&=\partial_{\overline{1}} a_{3}=\frac{2 z}{p^2 \left(1-z\overline{z}\right)^3}, \\
\partial_1 a_{4}&=\partial_{\overline{1}} a_{4}=-\frac{2 \left(p^2+3 p+2\right) z \left(1-z\overline{z}\right)^{\frac{2}{p}-1}}{p}, \\
\partial_2 a_{5}&=\partial_{\overline{2}} a_{6}=\frac{2 z \left(1-z\overline{z}\right)^{\frac{1}{p}-1}}{p},\\
\partial_1 a_{7}&=\partial_{\overline{1}} a_{7}=-\frac{2 z \left(1-z\overline{z}\right)^{\frac{1}{p}-1}}{p},\\
\partial_2 a_{1}&=\partial_{\overline{2}} a_{1}=\partial_2 a_{2}=\partial_{\overline{2}} a_{2}=\partial_2 a_{3}=\partial_{\overline{2}} a_{3}=\partial_2 a_{4}=\partial_{\overline{2}} a_{4} \\
&=\partial_1 a_{5}=\partial_{\overline{1}} a_{5}=\partial_{\overline{2}} a_{5}=\partial_1 a_{6}=\partial_{\overline{1}} a_{6}=\partial_2 a_{6}=\partial_2 a_{7}=\partial_{\overline{2}} a_{7}=0.
\end{align*}
In particular, from those vanishing terms
\begin{equation*}
\partial_{{i}}a_1=\partial_{{i}}a_2=\partial_{{i}}a_3=\partial_{{i}}a_4=\partial_{{i}}a_7=0,i=2,\overline{2},
\end{equation*}
and $a_5=a_6=0$, we have 
\begin{equation*}
\partial_2\det{g^{B}_{E}}=\partial_{\overline{2}}\det{g^{B}_{E}}=0.
\end{equation*}
On the other hand, from $\partial_1 a_{i}=\partial_{\overline{1}} a_{i},i=1,2,3,4$ with zero, and first-order derivatives of $a_i$'s, computation yields
\begin{equation*}
\partial_1\det{g^{B}_{E}}=\partial_{\overline{1}}\det{g^{B}_{E}}=\frac{2 (p+2) (2 p+1)^2 z\left(1-z\overline{z}\right)^{-\frac{1}{p}-3}}{p^2
	(p+1)}.
\end{equation*}
Now, we compute the second-order derivatives of $a_i$'s, then substituting $z=\overline{z},w=0$ yields the following:
\begin{align*}
\partial^2_{1\overline{1}}a_{1}&=-\frac{4 (p+1)^2 (2 p+1) \left(p-4 z\overline{z}\right)\left(1-z\overline{z}\right)^{\frac{4}{p}-2}}{p}, \\
\partial^2_{2\overline{2}} a_{1}&=-2 (p+1) \left(1-z\overline{z}\right)^{3/p} \left(4 p^3+p^2-p \left(z\overline{z}+2\right)-2z\overline{z}\right), \\
\partial^2_{1\overline{1}}a_{2}&=\frac{4 \left(1-z\overline{z}\right)^{-\frac{2 (p+2)}{p}} \left(p+4 z\overline{z}\right)}{p^2(p+1)^2}, \\
\partial^2_{2\overline{2}} a_{2}&=\frac{4 p \left(1-z\overline{z}\right)^{-5/p}}{(p+1)^3}, \\
\partial^2_{1\overline{1}}a_{3}&=\frac{4 z\overline{z}+2}{p^2 \left(z\overline{z}-1\right)^4}, \\
\partial^2_{1\overline{1}}a_{4}&=-\frac{2 \left(p^2+3 p+2\right) \left(p-2 z\overline{z}\right)
	\left(1-z\overline{z}\right)^{\frac{2}{p}-2}}{p^2}, \\ 
\partial^2_{2\overline{2}} a_{4}&=-2 \left(p^2-1\right) \left(1-z\overline{z}\right)^{\frac{1}{p}}, \\
\partial^2_{2\overline{1}} a_{5}&=\partial^2_{1\overline{2}} a_{6}=\frac{2 \left(1-z\overline{z}\right)^{\frac{1}{p}-2} \left(p-z\overline{z}\right)}{p^2}, \\ 
\partial^2_{1\overline{1}}a_{7}&=\frac{2 \left(1-z\overline{z}\right)^{\frac{1}{p}-2} \left(z\overline{z}-p\right)}{p^2}, \\
\partial^2_{1\overline{2}} a_{1}&=\partial^2_{2\overline{1}} a_{1}=\partial^2_{1\overline{2}} a_{2}=\partial^2_{2\overline{1}} a_{2}=\partial^2_{1\overline{2}} a_{3}=\partial^2_{2\overline{1}} a_{3}=\partial^2_{2\overline{2}} a_{3}=\partial^2_{1\overline{2}} a_{4}=\partial^2_{2\overline{1}} a_{4} \\
&=\partial^2_{1\overline{1}}a_{5}=\partial^2_{1\overline{2}} a_{5}=\partial^2_{2\overline{2}} a_{5}=\partial^2_{1\overline{1}}a_{6}=\partial^2_{2\overline{1}} a_{6}=\partial^2_{2\overline{2}} a_{6}=\partial^2_{1\overline{2}} a_{7}=\partial^2_{2\overline{1}} a_{7}=\partial^2_{2\overline{2}} a_{7}=0.
\end{align*}
For two terms $\partial^2_{1\overline{2}}\det{g^{B}_{E}}$ and $\partial^2_{2\overline{1}}\det{g^{B}_{E}}$, one can check the following: expand $\partial^2_{1\overline{2}}\det{g^{B}_{E}}$ and $\partial^2_{2\overline{1}}\det{g^{B}_{E}}$ as linear combinations of $a_i$'s and derivatives of those. Then each term in expressions contains at least one zero term. Consequently, we have 
\begin{equation*}
\partial^2_{1\overline{2}}\det{g^{B}_{E}}=\partial^2_{2\overline{1}}\det{g^{B}_{E}}=0.
\end{equation*}
For the other two terms, direct computation with second-order derivatives of $a_i$'s yields:
\begin{align*}
\partial^2_{1\overline{1}}\det{g^{B}_{E}}&=\frac{2 (p+2) (2 p+1)^2 \left(1-z\overline{z}\right)^{-\frac{1}{p}-4} \left(2 p
	z\overline{z}+p+z\overline{z}\right)}{p^3 (p+1)}, \\
\partial^2_{2\overline{2}}\det{g^{B}_{E}}&=\frac{\left(8 p^3+40 p^2+40 p+20\right) \left(1-z\overline{z}\right)^{-\frac{2
			(p+1)}{p}}}{p (p+1)^2}. 
\end{align*}
By combining with \eqref{eq:Ric}, we have all desired formulas from direct computations. 
\end{proof}	
\end{prop}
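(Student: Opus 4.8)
The plan is to compute the Ricci tensor directly from the logarithmic-determinant formula \eqref{eq:Ric}, exploiting the drastic simplifications that occur at the distinguished point $z=\overline{z}$, $w=0$. The structural input is Proposition~\ref{prop:metric}, which factors each $g^{B}_{i\overline{j}}$ as a product of the functions $a_1,\dots,a_7$, together with the observation that $a_5$ and $a_6$ carry the factors $w$ and $\overline{w}$ respectively and therefore vanish when $w=0$. Consequently the Bergman metric is diagonal at the point, and the determinant $\det g^{B}_{E}=a_1 a_2^2 a_3 a_4 a_7 - a_2^2 a_4^2 a_5 a_6$ reduces to $a_1 a_2^2 a_3 a_4 a_7$, which I would evaluate first to obtain the leading factor entering \eqref{eq:Ric}.

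First I would tabulate the zeroth-, first-, and second-order derivatives of each $a_i$ at $z=\overline{z}$, $w=0$. The qualitative feature to establish is that the mixed behaviour decouples: all $w$- and $\overline{w}$-derivatives of $a_1,a_2,a_3,a_4,a_7$ vanish at the point, while $a_5,a_6$ alone carry the coupling between the $z$- and $w$-directions. Tracking which of these derivatives survive is the bookkeeping heart of the argument, and it is exactly this sparsity that makes the final substitution into \eqref{eq:Ric} tractable by hand.

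Next I would assemble the derivatives of $\det g^{B}_{E}$ by the Leibniz rule. For the $w$-directions, the vanishing of $\partial_2 a_i=\partial_{\overline{2}}a_i=0$ for $i\in\{1,2,3,4,7\}$ forces $\partial_2\det g^{B}_{E}=\partial_{\overline{2}}\det g^{B}_{E}=0$, which kills the first term of \eqref{eq:Ric} in the $(2,\overline{2})$ slot and reduces $\Ric_{2\overline{2}}$ to $-(\det g^{B}_{E})^{-1}\partial^2_{2\overline{2}}\det g^{B}_{E}$. For the off-diagonal components I would argue that every term in the Leibniz expansion of $\partial^2_{1\overline{2}}\det g^{B}_{E}$ and $\partial^2_{2\overline{1}}\det g^{B}_{E}$ contains a vanishing factor; in particular the $a_2^2 a_4^2 a_5 a_6$ block could only survive through the cross-terms $\partial_{\overline{2}}a_5\,\partial_2 a_6$ and $\partial_2 a_5\,\partial_{\overline{2}}a_6$, both of which vanish because $a_5$ is independent of $\overline{w}$ and $a_6$ is independent of $w$, giving $\Ric_{1\overline{2}}=\Ric_{2\overline{1}}=0$. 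Finally, substituting the surviving $\partial_1\det g^{B}_{E}$, $\partial^2_{1\overline{1}}\det g^{B}_{E}$, and $\partial^2_{2\overline{2}}\det g^{B}_{E}$ into \eqref{eq:Ric} produces the three stated formulas after algebraic simplification.

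The main obstacle I anticipate is computational stamina rather than any conceptual difficulty: correctly carrying the many-term Leibniz expansions of the second derivatives of the product $a_1 a_2^2 a_3 a_4 a_7$ and of the full determinant, and keeping the fractional exponents in the powers $(1-z\overline{z})^{k/p}$ straight through differentiation. The one genuinely delicate point is that although the off-diagonal block $a_2^2 a_4^2 a_5 a_6$ vanishes at the point, one must not discard it before differentiating; its second derivatives have to be shown to vanish via the independence structure just described, and this is precisely what yields $\Ric_{1\overline{2}}=0$ instead of letting one assume it.
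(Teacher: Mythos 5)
Your strategy is the same as the paper's (tabulate the $a_i$ and their derivatives at $z=\overline z$, $w=0$, assemble derivatives of $\det g^{B}_{E}$ by the Leibniz rule, substitute into \eqref{eq:Ric}), but your handling of the off-diagonal block $B:=a_2^2a_4^2a_5a_6$ contains a genuine error that would corrupt $\Ric_{2\overline 2}$. You claim that all relevant second derivatives of $B$ vanish because the cross-terms $\partial_{\overline 2}a_5\,\partial_2a_6$ and $\partial_2a_5\,\partial_{\overline 2}a_6$ both vanish "by independence". Only the first one does. Since $a_5=2p^{-1}\overline z w\phi^{-1+\frac1p}$ is linear in $w$ and $a_6=2p^{-1}z\overline w\phi^{-1+\frac1p}$ is linear in $\overline w$, at the point one has $\partial_2a_5=\partial_{\overline 2}a_6=\frac{2z(1-z\overline z)^{\frac1p-1}}{p}\neq 0$ whenever $z\neq 0$ (these are exactly the nonzero entries in the paper's first-derivative table). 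Consequently
\begin{equation*}
\partial^2_{2\overline 2}B \;=\; a_2^2a_4^2\,(\partial_2a_5)(\partial_{\overline 2}a_6)\;=\;\frac{4(p+2)^2\,z\overline z\,\left(1-z\overline z\right)^{-\frac2p-2}}{p^2(p+1)^2}\;\neq\;0,
\end{equation*}
and this term must be subtracted in $\partial^2_{2\overline 2}\det g^{B}_{E}=\partial^2_{2\overline 2}\bigl(a_1a_2^2a_3a_4a_7\bigr)-\partial^2_{2\overline 2}B$. It is precisely this contribution that cancels the $z\overline z$-dependent part coming from $\partial^2_{2\overline 2}a_1$ (which contains $-p(z\overline z+2)-2z\overline z$), leaving the $z$-homogeneous coefficient $8p^3+40p^2+40p+20$ of the paper. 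If you drop it, as your plan dictates, $\Ric_{2\overline 2}=-(\det g^{B}_{E})^{-1}\partial^2_{2\overline 2}\det g^{B}_{E}$ acquires the spurious extra term $-\frac{2(p+2)\,z\overline z}{p(p+1)(2p+1)}\left(1-z\overline z\right)^{-1/p}$, contradicting the stated formula.

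A secondary inaccuracy: for the off-diagonal components your conclusion $\Ric_{1\overline 2}=\Ric_{2\overline 1}=0$ is right, but the terms you cite are not the ones that occur. The operator $\partial^2_{1\overline 2}$ applies $\partial_1$ and $\partial_{\overline 2}$, so the potentially surviving terms in $\partial^2_{1\overline 2}B$ are $(\partial_1a_5)(\partial_{\overline 2}a_6)$ and $(\partial_{\overline 2}a_5)(\partial_1a_6)$; the cross-terms you list involve $\partial_2$, which is not even being applied there. The correct vanishing argument is that $\partial_1a_5=\partial_1a_6=0$ at $w=0$ (a $z$-derivative does not remove the factors $w$, $\overline w$) together with $\partial_{\overline 2}a_5=0$ by independence of $\overline w$. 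So the off-diagonal conclusion survives with repaired reasoning, but the $(2,\overline 2)$ computation needs the nonzero $B$-block contribution to reach the result as stated.
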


From the next Proposition, we have Theorem~\ref{th:BE} as a Corollary. Moreover, not as in the smoothly bounded strictly pseudoconvex domain case, the Bergman metric even fails to be asymptotically K\"ahler-Einstein on $E$ (also, see \cite{CY80}).

\begin{prop}\label{prop:aysmp-Ric} 
On the point $(z,w)\in E$ satisfying $z=\overline{z},w=0$, we have 
\begin{equation*}
\frac{\Ric_{1\overline{1}}}{g_{1\overline{1}}}=-1,
\end{equation*}
\begin{equation*}
\frac{\Ric_{2\overline{2}}}{g_{2\overline{2}}}=-\frac{2 p^3+10 p^2+10 p+5}{(p+2)^2 (2 p+1)},
\end{equation*}
In particular, $\frac{\Ric_{1\overline{1}}}{g_{1\overline{1}}}=\frac{\Ric_{2\overline{2}}}{g_{1\overline{1}}}$ if and only if $p=1$.
\begin{proof}
From the formulas of $a_i$'s which are obtained in the proof in Proposition~\ref{prop:Ric} at $(z,\overline{z},0,0)\in E$ with $z=\overline{z}$, we have metric components
\begin{align*}
g_{1\overline{1}}&=a_1a_2a_3=\frac{2p+1}{p \left(1-z\overline{z}\right)^2}, \\
g_{2\overline{2}}&=a_2a_4a_7=\frac{2(p+2)}{(p+1)\left(1-z\overline{z}\right)^{1/p}}.
\end{align*}
Then by combining the formulas of Ricci curvature in Proposition~\ref{prop:Ric}, the result follows from the direct computation. 
\end{proof}

\end{prop}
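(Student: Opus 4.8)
The plan is to read off the two diagonal metric components at the base point directly from the zero-order data for the $a_i$ that was already assembled in the proof of Proposition~\ref{prop:Ric}, and then simply divide by the Ricci components recorded there. First I would substitute $z=\overline{z}$, $w=0$ into $g_{1\overline{1}}=a_1a_2a_3$ and $g_{2\overline{2}}=a_2a_4a_7$. Using $a_1=p(p+1)^2(2p+1)(1-z\overline{z})^{4/p}$, $a_2=(p+1)^{-2}(1-z\overline{z})^{-4/p}$, $a_3=p^{-2}(1-z\overline{z})^{-2}$, $a_4=(p^2+3p+2)(1-z\overline{z})^{2/p}$ and $a_7=2(1-z\overline{z})^{1/p}$, the powers of $(1-z\overline{z})$ telescope and, after factoring $p^2+3p+2=(p+1)(p+2)$, one obtains
\begin{equation*}
g_{1\overline{1}}=\frac{2p+1}{p(1-z\overline{z})^2}, \qquad g_{2\overline{2}}=\frac{2(p+2)}{(p+1)(1-z\overline{z})^{1/p}}.
\end{equation*}

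Next I would form the two quotients against the Ricci components from Proposition~\ref{prop:Ric}. For the first, numerator and denominator are literally equal up to sign, so $\Ric_{1\overline{1}}/g_{1\overline{1}}=-1$ with no further work. For the second, I would cancel the common factor $(1-z\overline{z})^{-1/p}$ and a factor $(p+1)$, leaving
\begin{equation*}
\frac{\Ric_{2\overline{2}}}{g_{2\overline{2}}}=-\frac{2p^3+10p^2+10p+5}{(p+2)^2(2p+1)}.
\end{equation*}
Here I should be careful that the $(p+1)(p+2)(2p+1)$ in the denominator of $\Ric_{2\overline{2}}$ combines correctly with the $(p+1)/(p+2)$ arising from inverting $g_{2\overline{2}}$: the $(p+1)$ cancels and the extra $(p+2)$ from the inverse produces the single factor $(p+2)^2(2p+1)$.

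Finally, for the ``in particular'' assertion I would set the two ratios equal, i.e. demand $(p+2)^2(2p+1)=2p^3+10p^2+10p+5$. Expanding the left side gives $2p^3+9p^2+12p+4$, so the equation reduces to $p^2-2p+1=(p-1)^2=0$, whence $p=1$; conversely $p=1$ plainly forces both ratios to equal $-1$. Every step is elementary, so the one point genuinely requiring care — and the step I would verify most carefully — is this last polynomial identity, since the whole of Theorem~\ref{th:BE} rests on the two diagonal curvature-to-metric ratios coinciding, and that coincidence occurs precisely at the double root $(p-1)^2=0$. I would also note that the off-diagonal terms vanish at this point ($\Ric_{1\overline{2}}=\Ric_{2\overline{1}}=0$ and $g_{1\overline{2}}=a_2a_4a_5=0$ since $a_5=0$ when $w=0$), so that these two diagonal ratios indeed capture the obstruction to $g^{B}_E$ being proportional to $g^{KE}_E$.
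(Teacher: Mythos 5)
Your proposal is correct and follows essentially the same route as the paper: read off the zero-order values of the $a_i$ from the proof of Proposition~\ref{prop:Ric}, form $g_{1\overline{1}}=a_1a_2a_3$ and $g_{2\overline{2}}=a_2a_4a_7$, and divide the Ricci components by them; your expansion $(p+2)^2(2p+1)=2p^3+9p^2+12p+4$ giving $(p-1)^2=0$ correctly supplies the ``direct computation'' the paper leaves implicit.
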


The holomorphic sectional curvature $h$ on $l$-dimensional complex hermitian manifold $(M,g)$ in the holomorphic tangent vector $\xi=\sum_{i=1}^{l}\xi_i \frac{\partial}{\partial z_i}$ is given by 
\begin{equation*}
h(\xi)=\frac{2R_{\xi \overline{\xi} \xi \overline{\xi}}}{{g(\xi,\overline{\xi})}^2}=\frac{ {\tiny }2\sum_{a,b,c,d=1}^{l}R_{a\overline{b}c\overline{d}}\xi_a \overline{\xi}_b \xi_c \overline{\xi}_d }{ \sum_{a,b,c,d=1}^{l}g_{a\overline{b}}g_{c\overline{d}}  \xi_a \overline{\xi}_b \xi_c \overline{\xi}_d},
\end{equation*}
where the components of curvature tensor $R$ associated with $g$ is given by 
\begin{equation}\label{eq:curv}
R_{a\overline{b}c\overline{d}}=-\frac{\partial^2 g_{a\overline{b}}}{\partial z_c \partial \overline{z}_d}+\sum_{p,q=1}^{l}g^{q\overline{p}}\frac{\partial g_{a\overline{p}}}{\partial z_c}\frac{\partial g_{q\overline{b}}}{\partial \overline{z}_d}.
\end{equation}

\begin{prop}\label{prop:hsc}
On the point $(z,w)\in E$ satisfying $z=\overline{z},w=0$, the components of the holomorphic sectional curvatures $h$ of $g^{B}_{E}$ are given as follows:
\begin{align*}
h({\frac{\partial}{\partial z}})&=-\frac{2p}{1+2p}, \\
h({\frac{\partial}{\partial w}})&=-\frac{1+4p+p^2}{(2+p)^2}.
\end{align*}

In particular, $h({\frac{\partial}{\partial z}})=h({\frac{\partial}{\partial w}})$ if and only if $p=1$.	
\begin{proof}	
From the formulas of $a_i, \partial_{j}a_i$'s which are obtained in the proof in Proposition~\ref{prop:Ric},
$g_{1\overline{2}}=g_{2\overline{1}}=0$ because of $a_4=a_5=0$. Also, from $\partial_{1} a_5=\partial_{\overline{1}} a_6=\partial_{2} a_5=\partial_{\overline{2}} a_6=0$, we have 
\begin{equation*}
\partial_{1}g_{1\overline{2}}=\partial_{\overline{1}}g_{2\overline{1}}=\partial_{2}g_{1\overline{2}}=\partial_{\overline{2}}g_{2\overline{1}}=0.
\end{equation*}
Hence the components of the curvature tensor in \eqref{eq:curv} become
\begin{align*}
R_{1\overline{1}1\overline{1}}&=-\partial^2_{1\overline{1}}g_{1\overline{1}}+g^{1\overline{1}}\partial_{1}g_{1\overline{1}}\partial_{\overline{1}}g_{1\overline{1}}, \\
R_{2\overline{2}2\overline{2}}&=-\partial^2_{2\overline{2}}g_{2\overline{2}}+g^{2\overline{2}}\partial_{2}g_{2\overline{2}}\partial_{\overline{2}}g_{2\overline{2}}.
\end{align*}
From the formulas of $a_i$'s which are obtained in the proof in Proposition~\ref{prop:Ric} and Proposition~\ref{prop:aysmp-Ric},
\begin{align*}
g^{1\overline{1}}&=\frac{g_{2\overline{2}}}{\det g^{B}_E}=\frac{p \left(1-z\overline{z} \right)^2}{2 p+1}, \\
g^{2\overline{2}}&=\frac{g_{1\overline{1}}}{\det g^{B}_E}=\frac{(p+1) \left(1-z\overline{z}\right)^{\frac{1}{p}}}{2 (p+2)}.
\end{align*}
Combining with necessary formulas of $\partial_{j}a_i, \partial^2_{i\overline{j}}a_{k}$'s in the proof in Proposition~\ref{prop:Ric}, with the formulas of the inverse metrics given above, the result follows from the direct computation. 
\end{proof}
\end{prop}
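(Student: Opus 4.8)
The plan is to evaluate the two diagonal curvature components $R_{1\overline{1}1\overline{1}}$ and $R_{2\overline{2}2\overline{2}}$ at the special points $z=\overline{z}$, $w=0$, since by the displayed formula for $h$ the holomorphic sectional curvature in the direction $\partial/\partial z$ (resp.\ $\partial/\partial w$) is simply $2R_{1\overline{1}1\overline{1}}/g_{1\overline{1}}^{\,2}$ (resp.\ $2R_{2\overline{2}2\overline{2}}/g_{2\overline{2}}^{\,2}$). The first observation is that the curvature formula \eqref{eq:curv} collapses dramatically at these points. Proposition~\ref{prop:metric} gives $g_{1\overline{2}}=a_2a_4a_5$ and $g_{2\overline{1}}=a_2a_4a_6$, and since $a_5=a_6=0$ there, the metric is diagonal; hence the inverse is diagonal with $g^{1\overline{1}}=g_{2\overline{2}}/\det g^{B}_E$ and $g^{2\overline{2}}=g_{1\overline{1}}/\det g^{B}_E$. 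Moreover, the vanishing of $\partial_1a_5$, $\partial_{\overline{1}}a_6$, $\partial_2a_5$, $\partial_{\overline{2}}a_6$ recorded in the proof of Proposition~\ref{prop:Ric} forces the off-diagonal first derivatives $\partial_1g_{1\overline{2}}$, $\partial_{\overline{1}}g_{2\overline{1}}$, $\partial_2g_{1\overline{2}}$, $\partial_{\overline{2}}g_{2\overline{1}}$ to vanish. Consequently, in the double sum of \eqref{eq:curv} every off-diagonal contribution is killed either by a zero entry of $g^{q\overline{p}}$ or by a vanishing first derivative, leaving only the diagonal term:
\begin{align*}
R_{1\overline{1}1\overline{1}}&=-\partial^2_{1\overline{1}}g_{1\overline{1}}+g^{1\overline{1}}\,\partial_1g_{1\overline{1}}\,\partial_{\overline{1}}g_{1\overline{1}},\\
R_{2\overline{2}2\overline{2}}&=-\partial^2_{2\overline{2}}g_{2\overline{2}}+g^{2\overline{2}}\,\partial_2g_{2\overline{2}}\,\partial_{\overline{2}}g_{2\overline{2}}.
\end{align*}

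Next I would assemble the required derivatives of $g_{1\overline{1}}=a_1a_2a_3$ and $g_{2\overline{2}}=a_2a_4a_7$ by the Leibniz rule, drawing every ingredient from the zero-, first-, and second-order derivative tables for the $a_i$ established in the proof of Proposition~\ref{prop:Ric}. Because at $z=\overline{z},w=0$ one has $\partial_2a_i=\partial_{\overline{2}}a_i=0$ for $i=1,2,3,4,7$ and $\partial_1a_i=\partial_{\overline{1}}a_i$ for the relevant indices, the product-rule expansions are short: for instance $\partial_1g_{1\overline{1}}=(\partial_1a_1)a_2a_3+a_1(\partial_1a_2)a_3+a_1a_2(\partial_1a_3)$, and in the second derivative only those pure and mixed terms whose factors are nonzero survive. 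Substituting the closed forms $g_{1\overline{1}}=\tfrac{2p+1}{p(1-z\overline{z})^2}$ and $g_{2\overline{2}}=\tfrac{2(p+2)}{(p+1)(1-z\overline{z})^{1/p}}$ from Proposition~\ref{prop:aysmp-Ric}, together with $\det g^{B}_E$, then reduces everything to elementary algebra in the single real quantity $z\overline{z}$ and the parameter $p$.

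Finally, forming $h(\partial/\partial z)=2R_{1\overline{1}1\overline{1}}/g_{1\overline{1}}^{\,2}$ and $h(\partial/\partial w)=2R_{2\overline{2}2\overline{2}}/g_{2\overline{2}}^{\,2}$ should produce, after cancellation, the rational expressions $-2p/(1+2p)$ and $-(1+4p+p^2)/(2+p)^2$, both manifestly independent of $z\overline{z}$. The concluding equivalence is then a one-line check: cross-multiplying the equality of the two values reduces it to $(1-p)^2=0$, so they coincide precisely when $p=1$. The main obstacle I anticipate is purely computational bookkeeping rather than conceptual: one must retain every surviving term in the second-order Leibniz expansions of $a_1a_2a_3$ and $a_2a_4a_7$, and track the fractional powers of $1-z\overline{z}$ so that they cancel and leave curvatures constant along the slice. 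The conceptual work—diagonalization of the metric and collapse of the connection term—is secured in the opening step; everything after it is careful substitution into the tables already compiled for the Ricci computation.
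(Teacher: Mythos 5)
Your proposal reproduces the paper's own proof essentially step for step: the same diagonalization of $g^{B}_{E}$ at $z=\overline{z},w=0$ coming from $a_5=a_6=0$, the same collapse of \eqref{eq:curv} to the two diagonal curvature components, the same inverse-metric formulas $g^{1\overline{1}}=g_{2\overline{2}}/\det g^{B}_{E}$ and $g^{2\overline{2}}=g_{1\overline{1}}/\det g^{B}_{E}$, and the same plan of Leibniz expansion against the derivative tables compiled in the proof of Proposition~\ref{prop:Ric}. So in approach there is nothing to distinguish the two.

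Two details should be flagged, and both occur in the paper's proof as well, so they are shared defects rather than gaps of yours alone. First, the vanishing list you cite is not what Proposition~\ref{prop:Ric} actually records: its table gives $\partial_2 a_5=\partial_{\overline{2}}a_6=\tfrac{2z}{p}\left(1-z\overline{z}\right)^{\frac{1}{p}-1}\neq 0$ for $z\neq 0$, and consequently $\partial_2 g_{1\overline{2}}$ and $\partial_{\overline{2}}g_{2\overline{1}}$ do \emph{not} vanish at those points. The vanishings that are both true and actually needed are $\partial_{\overline{2}}a_5=\partial_2 a_6=0$, which yield $\partial_{\overline{2}}g_{1\overline{2}}=\partial_2 g_{2\overline{1}}=0$; the potentially surviving off-diagonal term in $R_{2\overline{2}2\overline{2}}$ is $g^{1\overline{1}}\,\partial_2 g_{2\overline{1}}\,\partial_{\overline{2}}g_{1\overline{2}}$, so these are exactly the derivatives whose vanishing is required (your cited $\partial_1 g_{1\overline{2}}=\partial_{\overline{1}}g_{2\overline{1}}=0$ do handle $R_{1\overline{1}1\overline{1}}$ correctly). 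With this index correction the reduction to the two diagonal formulas is valid. Second, if you carry out the computation with the normalization you quote, $h(\xi)=2R_{\xi\overline{\xi}\xi\overline{\xi}}/g(\xi,\overline{\xi})^{2}$, you will find $R_{1\overline{1}1\overline{1}}=-\tfrac{2(2p+1)}{p}\left(1-z\overline{z}\right)^{-4}$ and $R_{2\overline{2}2\overline{2}}=-\tfrac{4(p^{2}+4p+1)}{(p+1)^{2}}\left(1-z\overline{z}\right)^{-2/p}$, hence $h(\partial/\partial z)=-\tfrac{4p}{2p+1}$ and $h(\partial/\partial w)=-\tfrac{2(p^{2}+4p+1)}{(p+2)^{2}}$, i.e.\ exactly twice the stated values; at $p=1$ this gives $-4/3$, the known Bergman holomorphic sectional curvature of the unit ball in $\mathbb{C}^{2}$, which confirms the factor. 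The proposition's displayed values correspond to the convention $h=R_{\xi\overline{\xi}\xi\overline{\xi}}/g(\xi,\overline{\xi})^{2}$ without the $2$. This uniform factor is immaterial for the conclusion, since equality of the two curvatures still reduces, by the cross-multiplication you describe, to $(1-p)^{2}=0$; but your assertion that the factor-$2$ formula ``produces'' the stated expressions is internally inconsistent, and in a final write-up you should either state the doubled values or drop the factor $2$ from the definition.
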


Proposition~\ref{prop:hsc} yields the following consequence:
\begin{coro}
the Bergman metric is not proportional to the Kobayashi-Royden metric on $E=E(1,1,p)$ with $1\neq p>1/2$. 
\begin{proof}
Suppose the Bergman metric is proportional to the Kobayashi-Royden metric. i.e., $\chi_{E}=\lambda\sqrt{g^{B}_{E}}$ for some $\lambda>0$. From the geometric-convexity of $E$, the Carath\'eodory-Reiffen metric is the same as the Kobayashi-Royden metric. Then the holomorphic sectional curvature of the Bergman metric must be the constant by applying Theorem 1 in \cite{BW77}, which is only possible when $p=1$ from Proposition~\ref{prop:hsc}. 
\end{proof}
\end{coro}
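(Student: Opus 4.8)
The plan is to argue by contradiction, combining Lempert's identification of the invariant metrics on convex domains, the rigidity theorem of Burbea--Wong, and the explicit holomorphic sectional curvatures computed in Proposition~\ref{prop:hsc}. Suppose, for contradiction, that the Bergman metric is proportional to the Kobayashi--Royden metric, that is, $\chi_{E}=\lambda\sqrt{g^{B}_{E}}$ for some constant $\lambda>0$. First I would invoke the geometric convexity of $E$ for $p>1/2$: by Lempert's theorem (\cite{Lem81}) the Carath\'eodory--Reiffen and Kobayashi--Royden metrics coincide, so $\gamma_{E}=\chi_{E}=\lambda\sqrt{g^{B}_{E}}$. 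Thus the hypothesis is equivalent to the statement that the Carath\'eodory--Reiffen metric is a \emph{constant} multiple of the Bergman metric on $E$.

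Next I would apply Theorem 1 of \cite{BW77}: on a bounded domain, proportionality by a global constant of the Carath\'eodory--Reiffen metric and the Bergman metric forces the Bergman metric to have constant holomorphic sectional curvature. The essential content to extract from this step is that the constancy is directional as well as pointwise, i.e. that the holomorphic sectional curvature $h(\xi)$ does not depend on the choice of holomorphic tangent direction $\xi$ at a fixed point of $E$.

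Finally I would contradict this directional constancy using Proposition~\ref{prop:hsc}. At any point of $E$ with $z=\overline{z}$ and $w=0$ one has
\begin{equation*}
h\left(\frac{\partial}{\partial z}\right)=-\frac{2p}{1+2p}, \qquad h\left(\frac{\partial}{\partial w}\right)=-\frac{1+4p+p^2}{(2+p)^2},
\end{equation*}
and these two values agree if and only if $p=1$. Hence for $1\neq p>1/2$ the holomorphic sectional curvature genuinely depends on the direction at such a point, so it cannot be constant, contradicting the previous step. Therefore no such $\lambda$ exists, and the Bergman metric is not proportional to the Kobayashi--Royden metric.

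The main obstacle I anticipate is the second step: one must verify that the hypotheses of Burbea--Wong are exactly met, namely that the proportionality is by a numerical constant rather than by a pointwise-varying factor, which is guaranteed here by the standing assumption $\chi_E=\lambda\sqrt{g^B_E}$, and that their conclusion of constant holomorphic sectional curvature is precisely the full directional-and-pointwise constancy that Proposition~\ref{prop:hsc} refutes. Once this is pinned down the contradiction is immediate, since exhibiting a single point at which two distinct directions yield unequal curvature values already defeats constancy; no global or boundary analysis is needed beyond the curvature formulas already in hand.
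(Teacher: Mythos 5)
Your proposal is correct and follows essentially the same route as the paper's proof: assume proportionality, use Lempert's theorem on the convex domain $E$ to identify $\gamma_E$ with $\chi_E$, invoke Theorem 1 of \cite{BW77} to force constant holomorphic sectional curvature of the Bergman metric, and contradict this with the two unequal directional curvature values from Proposition~\ref{prop:hsc} when $p\neq 1$. The only cosmetic slip is attributing the cited theorem to ``Burbea--Wong'' rather than to B.~Wong, but since you cite the same result this does not affect the argument.
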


\section{The Carath\'eodory-Reiffen metric on geometric convex domains}

In the following proposition, we consider bounded geometric convex domains $\Omega$ in $\mathbb{C}^n$ and distinguish the Carath\'eodory-Reiffen metric from K\"ahler-Einstein metric. Then Proposition~\ref{CKE-comparison} directly implies Theorem~\ref{th:CE}.

\begin{prop}\label{CKE-comparison}
For any bounded geometric convex domain $\Omega$ in $\mathbb{C}^n$, let $g^{KE}_{\Omega}$ be the complete K\"ahler-Einstein metric of Ricci curvature $-1$. Then we have
\begin{equation*}
\gamma_{\Omega}(a;v)\leq \sqrt{{g^{KE}_\Omega}((a;v),(a;v))} \text{  } \text{for all nonzero tangent vectors $(a;v)$},
\end{equation*}
and
\begin{equation*}
\gamma_{\Omega}(a;v)< \sqrt{{g^{KE}_\Omega}((a;v),(a;v))} \text{  } \text{for some nonzero tangent vector $(a;v)$}.
\end{equation*}	
Furthermore, if $\Omega$ is not biholomorphic to the unit disk in $\mathbb{C}^n$, then for any $\lambda>0$, 
\begin{equation*}
\gamma_{\Omega}(a;v) \neq \lambda \sqrt{{g^{KE}_\Omega}((a;v),(a;v))} \text{  } \text{for some nonzero tangent vector $(a;v)$}.
\end{equation*}	

\begin{proof}
	Notice that the first inequality is the consequence of the generalized Schwarz lemma (see \cite{Y78}). To show the second inequality, suppose 
	\begin{equation*}
	\gamma_{\Omega}(a;v)= \sqrt{{g^{KE}_\Omega}((a;v),(a;v))} \text{  } \text{for all nonzero tangent vector $(a;v)$}.
	\end{equation*}
	Then we have  
	\begin{equation*}
	c^{i}_{\Omega}=d^{{KE}}_{\Omega}.
	\end{equation*}
	Since $\Omega$ is geometric convex, the Carath\'eodory-Reifen metric is same as the Kobayashi-Royden metric. In particular, this forces Carath\'eodory pseudo-distance must be inner. Then from \eqref{eq:10}, $c_{{\Omega}}= c^{i}_{\Omega}=d^{{KE}}_{\Omega}$, which contradicts the main theorem in \cite{GC18}.
	In the case that a bounded geometric convex domain $\Omega$ is not biholomorphic to the disk,  if we assume further that $\gamma_{\Omega}(a;.)=\lambda \sqrt{{g^{KE}_\Omega}(.,.)}$, for some $\lambda>0$, then $\Omega$ must be biholomorphic to the unit disk by Theorem 2 in \cite{BW77}, which is impossible. 
\end{proof}
\end{prop}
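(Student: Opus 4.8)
The plan is to prove the three assertions separately, treating the pointwise inequality as a direct application of a comparison principle and reserving the real effort for the two rigidity statements, where the passage from infinitesimal metrics to integrated distances is the crux.

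For the first inequality I would appeal to the generalized Schwarz lemma of Yau (\cite{Y78}). Since $g^{KE}_{\Omega}$ is complete with Ricci curvature $-1$ and the model disk $\mathbb{D}$ carries the Poincar\'e metric of holomorphic sectional curvature $-4$, every competitor $f\in\Hol(\Omega,\mathbb{D})$ in the definition of $\gamma_{\Omega}$ is distance non-increasing from $(\Omega,g^{KE}_{\Omega})$ to $\mathbb{D}$ once the curvature normalizations are matched. Taking the supremum over all such $f$ then yields $\gamma_{\Omega}(a;v)\leq\sqrt{g^{KE}_{\Omega}((a;v),(a;v))}$ for every nonzero $(a;v)$. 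The only care needed here is bookkeeping of the comparison constant so that the coefficient comes out to be exactly $1$.

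For the strict inequality I would argue by contradiction. Suppose $\gamma_{\Omega}(a;v)=\sqrt{g^{KE}_{\Omega}((a;v),(a;v))}$ for every nonzero $(a;v)$. Integrating this identity of infinitesimal metrics along piecewise $C^1$ curves identifies the inner Carath\'eodory pseudo-distance with the K\"ahler-Einstein distance, $c^{i}_{\Omega}=d^{KE}_{\Omega}$. I would then exploit convexity: by Lempert's theorem the Carath\'eodory and Kobayashi metrics coincide on $\Omega$, so the general chain $c_{\Omega}\leq c^{i}_{\Omega}\leq k^{i}_{\Omega}=k_{\Omega}$ collapses to equalities, forcing in particular $c_{\Omega}=c^{i}_{\Omega}$; that is, the Carath\'eodory distance is itself inner. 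Combining the two equalities gives $c_{\Omega}=d^{KE}_{\Omega}$, contradicting the main theorem of \cite{GC18}, which forbids the Carath\'eodory distance of a bounded convex domain from coinciding with its complete K\"ahler-Einstein distance. Hence equality must fail for at least one tangent vector. For the non-proportionality statement I would again assume the contrary, that $\gamma_{\Omega}(a;\cdot)=\lambda\sqrt{g^{KE}_{\Omega}(\cdot,\cdot)}$ holds identically for some $\lambda>0$; such exact proportionality between the Carath\'eodory and the K\"ahler-Einstein metric is rigid, and Theorem 2 of \cite{BW77} converts it into the conclusion that $\Omega$ is biholomorphic to the unit ball, contradicting the standing hypothesis.

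The main obstacle is the second assertion. The delicate points are: justifying that pointwise equality of the two infinitesimal metrics integrates to equality of the induced distances, for which one needs that both metrics define genuine distances on the bounded domain and that the inner distances really are the integrated lengths recorded in the preliminaries; and, more importantly, correctly upgrading Lempert's infinitesimal identity $\gamma_{\Omega}=\chi_{\Omega}$ to the innerness $c_{\Omega}=c^{i}_{\Omega}$ of the integrated Carath\'eodory distance, so that the comparison $c_{\Omega}=d^{KE}_{\Omega}$ is legitimate and the rigidity of \cite{GC18} can bite. Once the distances are matched the cited theorems finish the proof, so the conceptual heart is precisely this metric-to-distance passage under convexity.
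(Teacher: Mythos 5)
Your proposal is correct and follows essentially the same route as the paper's own proof: Yau's generalized Schwarz lemma for the pointwise inequality, the contradiction argument that integrates pointwise equality to $c^{i}_{\Omega}=d^{KE}_{\Omega}$, uses Lempert's theorem on convex domains to force $c_{\Omega}=c^{i}_{\Omega}$ via the chain \eqref{eq:10}, and then invokes the main theorem of \cite{GC18}, with Theorem 2 of \cite{BW77} handling the non-proportionality statement. The delicate points you flag (the metric-to-distance passage and the innerness of $c_{\Omega}$) are exactly the steps the paper relies on, so no further commentary is needed.
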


\subsection*{Acknowledgements}
This work was partially supported by NSF grant DMS-1611745. I would like to thank my advisor Professor Damin Wu for many insights and deep encouragements. Also, I appreciate Prof. Yunhui Wu who pointed out the alternative proof of the Theorem~\ref{th:DE} after submitting the paper on ArXiv. I appreciate to Hyun Chul Jang who is the Ph.D. candidate in Uconn for helpful discussions. 
	


\begin{thebibliography}{99}
\bibitem{BBDFDKSKDMMP92}
B. Blank, D. Fan, D. Klein, S. Krantz, D. Ma, and M.-Y. Pang \emph{The Kobayashi metric of a complex ellipsoid in C2}, Experimental Math. 1 47–55, 1992.

\bibitem{EBSP99}
E. Bedford and S. Pinchuk, \emph{Domains in $\mathbb{C}^2$ with non-compact automorphism group}, Indiana Univ. Math. J. 47 149-222, 1999.


\bibitem{JSB85}
J.S. Bland, \emph{On the existence of bounded holomorphic functions on complete K\"ahler manifolds}, Invent. math. 81, 555-566 (1985).

\bibitem{JSB86}
J.S. Bland, \emph{The Einstein-K\"ahler Metric on $\{|z|^2+|w|^{2p}<1\}$}, Michigan math, J. 33, 1986.

\bibitem{MBCFRG83}
M. Beals, C. Fefferman, and R. Grossman. \emph{Strictly pseudoconvex domains in $\mathbb{C}^n$}. Bull. Amer. Math. Soc. (N.S.), 8(2):125–322, 1983.


\bibitem{CY80}
S.Y. Cheng and S.T. Yau 
\emph{On the existence of a complete k\"ahler metric on non-compact complex manifolds and the regularity of Feffermans’s equation}, Comm, Pure Appl. Math., 33(4):507-544, 507-544, 1980.
	
\bibitem{GC18}
G. Cho
\emph{Comparison of the Carath\'eodory-distance, the K\"ahler-Einstein distance and the Kobayashi-distance on complete Reinhardt domains}	
arXiv:1806.06311.

\bibitem{JPDA94}
J.P. D'Angelo \emph{An Explicit Computation of the Bergman Kernel Function} J.of Geometric Analysis, 4(1), 23-34, 1994.

\bibitem{KD70}
K. Diederich. \emph{Das Randverhalten der Bergmanschen Kernfunktion und Metrik in streng pseudo-konvexen Gebieten} Math. Ann., 187:9–36, 1970.

\bibitem{SFBW97}
S. Fu, B. Wong \emph{On strictly pseudoconvex domains
with K\"ahler-Einstein Bergman metrics} Mathematical Research Letters 4, 697–703, 1997.

\bibitem{IG75}
I. Graham,
\emph{Boundary behavior of the Carath\'eodory and Kobayashi metrics on strongly pseudoconvex domains in $\mathbb{C}^n$ with smooth boundary}, Trans. Amer. Math. Soc., 207:219– 240, 1975.

\bibitem{XHMX16}
X. Huang, M. Xiao, \emph{A remark on Bergman-Einstein metrics}, arXiv preprint arXiv:1604.07065. 

\bibitem{JP13}
M. Jarnicki and P. Pflug,
\emph{Invariant distances and metrics in complex analysis},  
volume 9 of de Gruyter Expositions in Mathematics. Walter de Gruyter GmbH and Co.KG, Berlin, extended edition, 2013.
	
	

\bibitem{KKLZ16}
K. Kim AND L. Zhang,
\emph{On the Uniform Squeezing Property of Bounded Convex Domains in $\mathbb{C}^n$}.
Pacific Journal of Mathematics Vol. 282, No. 2, 2016.
	


\bibitem{KLXSSTY04}
K. Liu, X. Sun, and S.T. Yau
\emph{Canonical metrics on the moduli space of Riemann Surfaces I}, J. Differential Geom.
Volume 68, Number 3, 571-637, 2004. 


\bibitem{Lem81}
L. Lempert, 
\emph{La m\'etrique de Kobayashi et la repr\'esentation des domaines sur la doule},  Bull. Soc. Math. France, 109(4):427-474, 1981.

\bibitem{NMY83}
N. Mok, S.T. Yau,
\emph{Completeness of the K\"ahler-Einstein Metric on Bounded Domains and the Characterization of Domains of Holomorphy by Curvature Conditions},  Proceedings of Symposia in Pure Mathematics, Vol \textbf{39}, Part 1, 41-59, 1983.

	

\bibitem{BW77}
B. Wong, \emph{On the holomorphic curvature of some intrinsic metrics}, Proc. Amer. Math. Soc. 65, 57-61, 1977.

\bibitem{BW772}
B. Wong, \emph{Characterization of the Unit Ball in $\mathbb{C}^n$ by its Automorphism Group}, Inventiones mathematicae 41: 253-258, 1977.

\bibitem{DY17}
D. Wu and S.T. Yau \emph{Invariant metrics on negatively pinched complete K\"ahler manifolds},
appear in Journal of the American Mathematical Society.


\bibitem{HW93}
H. Wu, \emph{Old and new invariant metrics on complex manifolds}, In Several complex variables (Stockholm, 1987/1988), volume 38 of Math. Notes, pages 640–682. Princeton Univ. Press, Princeton, NJ, 1993.

\bibitem{SKY09}
S.K. Yeung. \emph{Geometry of domains with the uniform squeezing property}, Adv. Math., 221(2):547-569, 2009.

\bibitem{Y78}
S.T. Yau,
\emph{A general Schwarz lemma for K\"ahler manifolds}, Amer. J. Math.,\textbf{100}(1):197-203, 1978.

\bibitem{WY97}
W. Yin,
\emph{The comparison theorem for the bergman and kobayashi metrics on certain pseudoconvex domains}, Complex Variables and Elliptic Equations, 34:4, 351-373, 1997.

\end{thebibliography}
\end{document}